\def\draft{n}
\newtheorem{theorem}{Theorem}[section]
\theoremstyle{definition}
\newtheorem{proposition}[theorem]{Proposition}
\newtheorem{lemma}[theorem]{Lemma}
\newtheorem{definition}[theorem]{Definition}
\newtheorem{corollary}[theorem]{Corollary}
\def\printname#1{
        \if\draft y
                \smash{\makebox[0pt]{\hspace{-0.5in}
                        \raisebox{8pt}{\tt\tiny #1}}}
        \fi
}
\newcommand{\psdraw}[2]
         {\begin{array}{c} \hspace{-1.3mm}
        \raisebox{-4pt}{\epsfig{figure=draws/#1.eps,width=#2}}
        \hspace{-1.9mm}\end{array}}
\newlength{\standardunitlength}
\long\def\@makecaption#1#2{%
     \vskip 10pt

\setbox\@tempboxa\hbox{
       \small\sf{\bfcaptionfont #1. }\ignorespaces #2}%
     \ifdim \wd\@tempboxa >\captionwidth {%
         \rightskip=\@captionmargin\leftskip=\@captionmargin
         \unhbox\@tempboxa\par}%
       \else
         \hbox to\hsize{\hfil\box\@tempboxa\hfil}%
     \fi}
\font\bfcaptionfont=cmssbx10 scaled \magstephalf
\newdimen\@captionmargin\@captionmargin=2\parindent
\newdimen\captionwidth\captionwidth=\hsize
\def\lbl#1{\label{#1}\printname{#1}}
\def\BN{\mathbbm N}
\def\BZ{\mathbbm Z}
\def\BQ{\mathbbm Q}
\def\BC{\mathbbm C}
\def\BK{\mathbbm K}
\def\BW{\mathbbm W}
\def\la{\langle}
\def\ra{\rangle}
\def\SL{\mathrm{SL}}
\def\longto{\longrightarrow}
\def\ldeg{\operatorname{ldeg}}
\def\pt{\partial}
\def\ann{\mathrm{ann}}
\def\calM{\mathcal{M}}
\def\calI{\mathcal{I}}
\begin{document}
\title%
[Irreducibility of $q$-difference operators and the knot $7_4$]%
{Irreducibility of $q$-difference operators\\ and the knot $7_4$}
\author{Stavros Garoufalidis}
\address{School of Mathematics \\
         Georgia Institute of Technology \\
         Atlanta, GA 30332-0160, USA \newline
         {\tt \url{http://www.math.gatech.edu/~stavros}}}
\email{stavros@math.gatech.edu}
\author{Christoph Koutschan}
\address{Johann Radon Institute for Computational and Applied Mathematics (RICAM)\\
         Austrian Academy of Sciences \\
         Altenberger Stra\ss e 69 \\
         A-4040 Linz, Austria \newline
         {\tt \url{http://www.koutschan.de}}}
\email{christoph.koutschan@ricam.oeaw.ac.at}
\thanks{%
S.G. was supported in part by grant DMS-0805078
  of the US National Science Foundation.\bigskip\\
{\em 2010 Mathematics Subject Classification:}
  Primary 57N10. Secondary 57M25, 33F10, 39A13.\\
{\em Key words and phrases:}
  $q$-holonomic module, $q$-holonomic sequence, creative telescoping, 
  irreducibility of $q$-difference operators, factorization of
  $q$-difference operators, qHyper, Adams operations, quantum topology,
  knot theory, colored Jones polynomial, AJ conjecture, double twist
  knot, $7_4$.\bigskip\\
First published in \emph{Algebraic \& Geometric Topology} 13(6), pp. 3261--3286,
published by Mathematical Sciences Publishers.
}

\date{October 10, 2013}

\begin{abstract}
  Our goal is to compute the minimal-order recurrence of
  the colored Jones polynomial of the $7_4$ knot, as well as for the first
  four double twist knots. As a corollary, we verify the AJ
  Conjecture for the simplest knot $7_4$ with reducible non-abelian
  $\SL(2,\BC)$ character variety. To achieve our goal, we use symbolic summation
  techniques of Zeilberger's holonomic systems approach and an
  irreducibility criterion for $q$-difference operators. For the latter we use an
  improved version of the qHyper algorithm of Abramov-Paule-Petkov\v{s}ek to
  show that a given $q$-difference operator has no linear right factors.
En route, we introduce exterior power Adams operations on the ring of bivariate
polynomials and on the corresponding affine curves.
\end{abstract}

\maketitle

\tableofcontents


\section{Introduction}
\lbl{sec.intro}

\subsection{Notation}
\lbl{sub.not}

Throughout the paper the symbol~$\BK$ denotes a field of characteristic zero;
for most applications one may think of $\BK=\BQ$. We write
$\BK[X_1,\dots,X_n]$ for the ring of polynomials in the variables
$X_1,\dots,X_n$ with coefficients in~$\BK$, and similarly
$\BK[X_1^{\pm1},\dots,X_n^{\pm1}]$ for the ring of Laurent polynomials, and
$\BK(X_1,\dots,X_n)$ for the field of rational functions.  In a somewhat
sloppy way we use angle brackets, e.g., $\BK\la X_1,\dots,X_n\ra$, to refer to
the ring of polynomials in $X_1,\dots,X_n$ with some non-commutative
multiplication. This non-commutativity may occur between variables $X_i$
and~$X_j$, or between the coefficients in $\BK$ and the variables~$X_i$.  It
will be always clear from the context which commutation rules apply. Let
$p(X,Y_1,\dots,Y_n)=\sum_{k=a}^bp_k(Y_1,\dots,Y_n)X^k$, $a,b\in\BZ$, be a
nonzero Laurent polynomial with $p_a\neq0$ and $p_b\neq0$; then we define
$\deg_X(p):=b$ and $\ldeg_X(p):=a$. As usual, $\lfloor a\rfloor$
(resp. $\lceil a\rceil$) denotes the largest integer $\leq a$ (resp. smallest
integer $\geq a$).

\subsection{The colored Jones polynomial of a knot and its recurrence}
\lbl{sub.cj}

The {\em colored Jones function} $J_{K,n}(q) \in \BZ[q^{\pm 1}]$ of a
{\em knot} $K$ in 3-space for $n \in \BN$ is a powerful knot invariant
which satisfies a linear recurrence (i.e., a linear recursion relation) with coefficients that are
polynomials in $q$ and~$q^n$ \cite{GL}. The {\em non-commutative
$A$-polynomial} $A_K(q,M,L)$ of~$K$ is defined to be the (homogeneous
and content-free) such recurrence for $J_{K,n}(q)$ that has minimal order,
written in operator notation.
(By ``content-free'' we mean that the coefficients of the recurrence, 
which are polynomials in $q$ and~$M$, do not share a common non-trivial factor.)
By definition, the non-commutative $A$-polynomial of~$K$ is an element of
the {\em localized} $q$-{\em Weyl algebra}
\[
  \BW=\BK(q,M)\la L \ra/(LM-qML)
\]
where $\BK=\BQ$ and the symbols $L$ and $M$ denote operators which act on a
sequence $f_n(q)$ by
\[
  (Lf)_n(q)=f_{n+1}(q), \qquad (Mf)_n(q)=q^n f_n(q) \,.
\]
The non-commutative $A$-polynomial of a knot
allows one to compute the {\em Kashaev invariant} of a knot in 
{\em linear time},
and to confirm numerically the {\em Volume Conjecture} of Kashaev,
the {\em Generalized Volume Conjecture} of Gukov and Garoufalidis-Le,
the {\em Modularity Conjecture} of Zagier, the {\em Slope Conjecture}
of Garoufalidis and the {\em Stability Conjecture} of Garoufalidis-Le. For a 
discussion of the above conjectures and for a survey of computations, 
see~\cite{Ga6}. This explains the importance of exact formulas for the 
non-commutative $A$-polynomial of a knot.

In \cite{Ga1} (see also \cite{Ge}) the first author formulated the
{\em AJ conjecture} which relates the specialization $A_K(1,M,L)$ with
the $A$-polynomial $A_K(M,L)$ of~$K$. The latter parametrizes the
affine variety of $\SL(2,\BC)$ representations of the knot complement,
viewed from the boundary torus~\cite{CCGLS}.

So far, the AJ conjecture has been verified only for knots whose
$A$-polynomial consists of a single multiplicity-free component (aside
from the component of abelian representations) \cite{Le,LeTran}. For
the remaining knots, and especially for the hyperbolic knots, one does
not know whether the non-commutative $A$-polynomial detects
\renewcommand{\labelenumi}{(\alph{enumi})}
\begin{enumerate}
\item all non-geometric components of the $\SL(2,\BC)$ character variety,
\item their multiplicities.
\end{enumerate}

Our goal is to compute the non-commutative $A$-polynomial of the simplest
knot whose $A$-polynomial has two irreducible components of non-abelian
$\SL(2,\BC)$ representations (see Theorem~\ref{thm.cj22}), as well
as recurrences for the colored Jones polynomials of the first four double
twist knots. En route, we will introduce {\em Adams operations} on~$\BW$
which will allow us to define Adams operations of the ring $\BQ[M,L]$ of
$A$-polynomials and their non-commutative counterparts. 

\subsection{Minimal-order recurrences}
\lbl{sub.minimal}

We split the problem of determining a minimal-order recurrence for a given
sequence into two independent parts:
\renewcommand{\labelenumi}{(\alph{enumi})}
\begin{enumerate}
\item Compute a recurrence: if the sequence is defined by a
  multidimensional sum of a proper $q$-hypergeometric term (as it is the case
  for the colored Jones polynomial), numerous algorithms can produce a linear
  recurrence with polynomial coefficients; see for
  instance~\cite{PWZ}. Different algorithms in general produce different
  recurrences, which may not be of minimal order~\cite{PauleRiese97}.
\item Show that the recurrence produced in (a) has in fact minimal
  order: this can be achieved by proving that the corresponding operator is
  irreducible in~$\BW$ . Criteria for certifying the irreducibility of a
  $q$-difference operator are presented in Section~\ref{sec.irred}.
\end{enumerate}

\subsection{The non-commutative $A$-polynomial of the $7_4$ knot}
\lbl{sub.74}

To illustrate our ideas concretely, rigorously and effectively, we 
focus on the simplest knot with reducible $A$-polynomial, namely the 
$7_4$ knot in Rolfsen's notation~\cite{Rf}:
$$\psdraw{74knot}{0.8in}$$
$7_4$ is a 2-{\em bridge knot} $K(11/15)$, and a {\em double-twist} knot 
obtained by $(-1/2,-1/2)$ surgery on the {\em Borromean rings}.
Its $A$-polynomial can be computed with the {\tt Mathematica} 
implementation by Hoste or with the {\tt Maxima} implementation by
Huynh, see also Petersen~\cite{Pe}, and it is given by
\begin{multline*}
A_{7_4}(M,L) = (L^2 M^8-L M^8+L M^6+2 L M^4+L M^2-L+1)^2\\
\times (L^3 M^{14}-2 L^2 M^{14}+L M^{14}+6 L^2 M^{12}-2 L M^{12}+2 L^2 M^{10}+3 L M^{10}
-7 L^2 M^8\\
+2 L M^8+2 L^2 M^6-7 L M^6+3 L^2 M^4+2 L M^4-2 L^2 M^2+6 L M^2+L^2-2 L+1).
\end{multline*}
The first factor of $A_{7_4}(M,L)$ has multiplicity two and corresponds to a
non-geometric component of the $\SL(2,\BC)$ character variety of~$7_4$.  The
second factor of $A_{7_4}(M,L)$ has multiplicity one and corresponds to the
geometric component of the $\SL(2,\BC)$ character variety of~$7_4$.  Let
$A_{7_4}^{\text{red}}$ denote the squarefree part of the above polynomial
(i.e., where the second power of the first factor is replaced by the first
power), called the \emph{reduced $A$-polynomial}.  Finally, let
$A_{-7_4}^{\text{red}}(M,L)=A_{7_4}^{\text{red}}(M,L^{-1})L^5 \in \BZ[M,L]$
denote the reduced $A$-polynomial of~$-7_4$, the mirror of~$7_4$.

\begin{definition}
We say that an operator $P\in\BK(q)\la M^{\pm1},L^{\pm1}\ra/(LM-qML)$
is \emph{palindromic} if and only if there exist integers $a,b\in\BZ$ such that
\begin{equation}\lbl{eq.palin}
  P(q,M,L)=(-1)^aq^{bm/2}M^mL^bP(q,M^{-1},L^{-1})L^{\ell-b}
\end{equation}
where $m=\deg_M(P)+\ldeg_M(P)$ and $\ell=\deg_L(P)+\ldeg_L(P)$.  An
operator in~$\BW$ is called palindromic if, after clearing
denominators, it is palindromic in the above sense.
\end{definition}
If $P=\sum_{i,j}p_{i,j}M^iL^j$ then condition~\eqref{eq.palin} implies
that $p_{i,j} = (-1)^a q^{b(i-m/2)} p_{m-i,\ell-j}$ for all
$i,j\in\BZ$. Note also that palindromic operators give rise to (skew-)
symmetric solutions (if doubly-infinite sequences $(f_n)_{n\in\BZ}$ are
considered). More precisely, the equation $Pf=0$ for palindromic $P$
admits nontrivial symmetric (i.e., $f_{\lceil r+n\rceil}=f_{\lfloor
r-n\rfloor}$ for all~$n$) and skew-symmetric (i.e., $f_{\lceil
r+n\rceil}=-f_{\lfloor r-n\rfloor}$ for all~$n$) solutions, where
$r=(\ell-b)/2$ is the reflection point.

The next theorem gives the non-commutative $A$-polynomial of $7_4$ in
its inhomogeneous form. Every inhomogeneous recurrence $Pf=b$ gives rise
to a homogeneous recurrence $(L-1)(b^{-1}P) f =0$.

\begin{theorem}
\lbl{thm.cj22}
The inhomogeneous 
non-commutative $A$-polynomial of $7_4$ is given by the equation
\begin{equation}
\lbl{eq.74}
  P_{7_4} J_{7_4,n}(q) = b_{7_4}
\end{equation}
with $b_{7_4}\in\BQ(q,q^n)$ and $P_{7_4}\in\BW$ being a palindromic
operator of $(q,M,L)$-degree $(65,24,5)$; both are given explicitly in
Appendix~\ref{app.74}.
\end{theorem}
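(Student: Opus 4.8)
The plan is to follow the two-step strategy of Section~\ref{sub.minimal}: first produce a certified inhomogeneous recurrence satisfied by $J_{7_4,n}(q)$, and then prove that the operator occurring in it has minimal order by establishing its irreducibility in~$\BW$. For the first step I would start from a $q$-hypergeometric (multi)sum formula for the colored Jones polynomial of the double-twist knot~$7_4$, which follows from its description as $(-1/2,-1/2)$ surgery on the Borromean rings, or from Habiro's cyclotomic expansion. Feeding the summand into Zeilberger's holonomic systems approach --- concretely, creative telescoping as implemented in the {\tt HolonomicFunctions} package --- produces a linear operator $P$ together with a certificate that exhibits $PJ_{7_4,n}(q)$ as a telescoped, hence explicitly summable, quantity, yielding the inhomogeneous right-hand side $b_{7_4}\in\BQ(q,q^n)$.

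Once $P_{7_4}$ and its certificate are written down, the identity~\eqref{eq.74} reduces to a finite rational-function verification, so this half is self-checking. The degree claim $(65,24,5)$ and the palindromic symmetry~\eqref{eq.palin} are then checked directly against the explicit coefficients displayed in Appendix~\ref{app.74}: the symmetry amounts to verifying the relations $p_{i,j}=(-1)^aq^{b(i-m/2)}p_{m-i,\ell-j}$ for suitable integers $a,b$, which is a finite inspection.

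For the second step --- minimality --- I would show that $P_{7_4}$ is irreducible as an element of~$\BW$, using the irreducibility criterion of Section~\ref{sec.irred}. Since $P_{7_4}$ has $L$-order~$5$, every nontrivial factorization $P_{7_4}=QR$ satisfies $\min(\operatorname{ord}Q,\operatorname{ord}R)\le 2$; because passing to the adjoint $P_{7_4}^{*}$ interchanges left and right factors, it suffices to rule out right factors of order~$1$ and~$2$ of both $P_{7_4}$ and $P_{7_4}^{*}$. An order-$1$ right factor is exactly a $q$-hypergeometric solution, which the improved qHyper algorithm detects or certifies absent. An order-$2$ right factor is reduced to the order-$1$ case through the exterior power Adams operation: a right factor of order~$2$ of an operator corresponds to a hypergeometric right factor of its second exterior power $\we^2$, so running qHyper on $\we^2 P_{7_4}$ and $\we^2 P_{7_4}^{*}$ as well completes the list. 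Certifying that none of these four operators admits a hypergeometric right factor establishes irreducibility, and hence that~\eqref{eq.74} is the minimal-order inhomogeneous recurrence.

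The main obstacle is this last step. Unlike the recurrence itself, minimality is a non-existence statement, so the \emph{completeness} of the improved qHyper search is essential: it must provably enumerate all candidate hypergeometric factors rather than merely fail to find one. Compounding this, the operators involved are large (degree $65$ in~$q$ and $24$ in~$M$, with $\we^2 P_{7_4}$ larger still), so the qHyper computation is the genuine computational bottleneck. A secondary subtlety is the passage between the order-$5$ inhomogeneous recurrence and its homogenization $(L-1)(b_{7_4}^{-1}P_{7_4})$ of order~$6$; one must confirm that irreducibility of $P_{7_4}$ indeed yields minimality of the inhomogeneous recurrence in the intended sense.
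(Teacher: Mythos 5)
Your computation step matches the paper's (Section~\ref{sec.compute}): creative telescoping in \texttt{HolonomicFunctions} applied to the double-sum formula \eqref{eq.CJpp'}--\eqref{eq.cp}, with the certificate making \eqref{eq.74} a finite rational-function check, and palindromicity and the degree count $(65,24,5)$ verified by inspection of the explicit output. One nuance: the summand of \eqref{eq.CJpp'} is not $q$-hypergeometric in $(n,k)$ because $c_{2,k}$ satisfies a second-order recurrence, so the paper invokes Chyzak's generalization of the $q$-Zeilberger algorithm; your implicit treatment of the whole expression as a proper $q$-hypergeometric multisum is an equally legitimate route.

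Your irreducibility step takes a genuinely different route. You cover the splits $(1,4),(2,3),(3,2),(4,1)$ by running qHyper on $P_{7_4}$, $\bigwedge^{\!2}\!P_{7_4}$, and on the formal adjoint $P_{7_4}^{*}$ and $\bigwedge^{\!2}\!P_{7_4}^{*}$; this is sound, since the adjoint anti-automorphism ($M\mapsto M$, $L\mapsto L^{-1}$, suitably normalized) exists on $\BW$ and swaps left and right factors, and Theorem~\ref{thm.factor} supplies the needed implication (an order-$k$ right factor forces a linear right factor of the $k$-th exterior power). The paper instead exploits the $q=1$ specialization: $P_{7_4}(1,M,L)$ factors into irreducibles of $L$-degrees $2$ and $3$, which by the Newton-polygon argument underlying Proposition~\ref{prop.easy} and Lemma~\ref{lem.newton} already excludes right factors of $L$-degree $1$ and $4$, leaving only $\bigwedge^{\!2}\!P_{7_4}$ and $\bigwedge^{\!3}\!P_{7_4}$ to test. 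The two routes are essentially equivalent in content --- $\bigwedge^{\!2}\!P_{7_4}^{*}$ agrees with $\bigwedge^{\!3}\!P_{7_4}$ up to a twist by the full Casoratian, which is first-order and hence preserves the existence of hypergeometric solutions --- but the $q=1$ reduction eliminates orders $1$ and $4$ at no computational cost. Your concern about completeness of the qHyper search is resolved in the paper by the divisor conditions $a(q,M)\mid p_0(q,M)$ and $b(q,M)\mid p_d(q,q^{1-d}M)$ together with the pruning criteria \eqref{eq.q1} and \eqref{eq.gcd}, which cut the enumeration to $4504$ and $23600$ cases.

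The genuine gap is your final step. Irreducibility of $P_{7_4}$ by itself does \emph{not} yield minimality: a priori $J_{7_4,n}(q)$ could satisfy an order-zero inhomogeneous recurrence $af=c$ with $a,c\in\BK(q,q^n)$ (i.e., be a rational function of $q$ and $q^n$), in which case $(L-1)(c^{-1}a)$ would be a homogeneous annihilator of order $1$ and the non-commutative $A$-polynomial would not be the order-$6$ homogenization of \eqref{eq.74}, irreducibility of $P_{7_4}$ notwithstanding. You flag the homogenization passage as a subtlety to be ``confirmed,'' but confirming it needs two inputs you never supply: (i) the exclusion of order-zero inhomogeneous recurrences, which the paper derives from the fact that $\deg_q(J_{7_4,n}(q))=\tfrac72 n^2-\tfrac52 n-1$ is quadratic, not linear, in $n$ (Lemma~\ref{lem.noCF}, Section~\ref{sec.finish}); and (ii) Lemma~\ref{lem.cj22}, which uses Ore's theorem --- in the Euclidean domain $\BW$ the number of irreducible factors of each order is the same in every factorization --- to show that any annihilator of $f$ of $L$-degree at most $d=5$ would be a right factor of $(L-1)(b^{-1}P_{7_4})$ of $L$-degree $1$ or $d$, and that either case leads to a contradiction via the division $P=QP''+R$ with $\deg_L(R)<\deg_L(P'')$. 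Without (i) and (ii) your argument establishes that $P_{7_4}$ is irreducible, but not the assertion of Theorem~\ref{thm.cj22} that \eqref{eq.74} is the inhomogeneous form of the \emph{minimal-order} recurrence.
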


The proof of Theorem \ref{thm.cj22} consists of three parts:
\renewcommand{\labelenumi}{\arabic{enumi}.}
\begin{enumerate}
\item Compute the inhomogeneous recurrence~\eqref{eq.74} for the colored
Jones function $J_{7_4,n}(q)$ using the iterated double sum formula for
the colored Jones function (Equation \eqref{eq.CJpp'})
and rigorous computer algebra algorithms (see Section~\ref{sec.compute}). 
\item Prove that the operator~$P_{7_4}$ has no right factors of 
positive order (see Section~\ref{sec.irred}). To this end, we
discuss some natural $\BW$-modules associated to a knot, given
by the exterior algebra operations.
\item Show that $J_{7_4,n}(q)$ does not satisfy a zero-order
inhomogeneous recurrence, by using the degree of the colored Jones
function (see Section~\ref{sec.finish}).
\end{enumerate}

\begin{corollary}
The AJ conjecture holds for the knot~$7_4$:
\[
P_{7_4}(1,M,L) = 
A_{-7_4}^{\text{red}}(M^{1/2},L) (M-1)^5 (M+1)^4 (2M^4-5M^3+8M^2-5M+2).
\]
\end{corollary}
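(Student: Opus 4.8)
The plan is to reduce the statement to a finite, explicit polynomial verification, using the operator $P_{7_4}$ furnished by Theorem~\ref{thm.cj22} (and written out in Appendix~\ref{app.74}) together with the classical $A$-polynomial $A_{7_4}(M,L)$ displayed above. Since the non-commutative $A$-polynomial is by definition content-free, I would first write $P_{7_4}=\sum_{j=0}^{5}c_j(q,M)L^j$ with $c_j\in\BQ[q,M]$ having no common non-unit factor. The crucial preliminary point is that the specialization at $q=1$ be well-defined: content-freeness guarantees that not all $c_j$ are divisible by $(q-1)$, so $P_{7_4}(1,M,L)$ is a nonzero element of $\BQ[M^{\pm1},L]$ rather than an indeterminate $0/0$. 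Because the defining relation $LM-qML=0$ degenerates at $q=1$ to $LM=ML$, the specialized object $P_{7_4}(1,M,L)$ is an ordinary commutative Laurent polynomial, and the corollary becomes a plain identity in $\BQ[M^{\pm1},L]$.

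Next I would make the right-hand side explicit. Starting from $A_{7_4}(M,L)$, I take its squarefree part $A_{7_4}^{\text{red}}$ (replacing the squared first factor by its first power), form $A_{-7_4}^{\text{red}}(M,L)=A_{7_4}^{\text{red}}(M,L^{-1})L^{5}$ as defined in Section~\ref{sub.74}, and finally substitute $M\mapsto M^{1/2}$. The substitution $M\mapsto M^{1/2}$ reflects the standard relation between the quantum meridian operator $M$, which acts by $q^n$, and the square of the classical meridian eigenvalue (so that $M_{\mathrm{cl}}=q^{n/2}$), while passing to the mirror $-7_4$ accounts for the $q\mapsto q^{-1}$ convention built into $P_{7_4}$. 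The half-integer powers of $M$ are expected to recombine into integer powers after multiplication by the $M$-only correction factor.

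With both sides now explicit, I would factor $P_{7_4}(1,M,L)$ over $\BQ$, for instance by treating it as a polynomial in $L$ with coefficients in $\BQ[M^{\pm1}]$. The expectation, consistent with the AJ conjecture, is that it splits as a product of two $L$-dependent irreducible factors---matching the two non-abelian components encoded in $A_{-7_4}^{\text{red}}(M^{1/2},L)$---times a factor depending on $M$ alone, namely $(M-1)^5(M+1)^4(2M^4-5M^3+8M^2-5M+2)$. The verification then amounts to checking that these two $L$-dependent factors coincide with the two irreducible factors of $A_{-7_4}^{\text{red}}(M^{1/2},L)$ and that the residual $M$-content is exactly the stated product. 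It is worth noting that the multiplicity-two factor of $A_{7_4}$ is recovered here only to the first power, confirming that $P_{7_4}$ detects all non-abelian components but not their multiplicities.

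The main obstacle I anticipate is not conceptual but one of bookkeeping: correctly normalizing $P_{7_4}$ to be content-free, confirming that no spurious factor of $(q-1)$ is lost or introduced upon specialization (equivalently, that the $L$-degree drop at $q=1$, if any, is accounted for), and tracking the $M\mapsto M^{1/2}$ substitution together with the mirror convention so that the two factorizations are compared in identical coordinates. Once these normalizations are pinned down, the identity is a routine finite polynomial comparison.
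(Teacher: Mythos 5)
Your proposal is correct and takes essentially the same route as the paper: the paper's entire proof is to set $q=1$ in the explicit operator $P_{7_4}$ of Theorem~\ref{thm.cj22}, whose specialization $P_{7_4}(1,M,L)=(M-1)^5(M+1)^4 v_3\,(L^2-v_1L+M^4)(L^3+v_4L^2+\overline{v_4}L+M^7)$ is recorded in Appendix~\ref{app.74} and matches $A_{-7_4}^{\text{red}}(M^{1/2},L)$ times the stated $M$-content by direct polynomial comparison. Your added care about content-freeness, the well-definedness of the $q=1$ specialization, and the $M\mapsto M^{1/2}$ and mirror conventions is sound bookkeeping that the paper leaves implicit, but it does not change the argument.
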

\begin{proof}
This follows from Theorem~\ref{thm.cj22} by setting $q=1$.
\end{proof}


\section{The colored Jones polynomial of double twist knots}
\lbl{sec.cj}

Let $J_{K,n}(q)$ denote the colored Jones polynomial of the 0-framed
knot~$K$, colored by the $n$-dimensional irreducible representation of
$\mathfrak{sl}_2(\BC)$ and normalized to be~$1$ at the
unknot~\cite{Tu1,Tu2,Ja}.  The {\em double twist knot} $K_{p,p'}$
depicted below is given by $(-1/p,-1/p')$ surgery on the Borromean
rings for integers~$p,p'$,
\[
  \psdraw{Kpp}{1.3in}
\]
where the boxes indicate half-twists as follows 
\[
  \psdraw{twists}{2in}
\]
Using the Habiro theory of the colored Jones function,
(see \cite[Sec.6]{Lauridsen} following \cite{Masbaum}
and \cite{Habiro}) it follows that
\begin{equation}
\lbl{eq.CJpp'}
  J_{K_{p,p'},n}(q) = \sum_{k=0}^{n-1} (-1)^k c_{p,k}(q) c_{p',k}(q) q^{-k n -\frac{k(k+3)}{2}}
  (q^{n-1};q^{-1})_k (q^{n+1};q)_k 
\end{equation}
where $(x;q)_n$ denotes, as usual, the $q$-Pochhammer symbol defined
as $\prod_{j=0}^{n-1}(1-xq^j)$ and
\begin{equation}
\lbl{eq.cp}
  c_{p,n}(q) = \sum_{k=0}^n(-1)^{k + n} q^{-\frac{k}{2}+\frac{k^2}{2}+\frac{3 n}{2}+\frac{n^2}{2}+k p+k^2 p}
  \frac{(1 - q^{2k+1}) (q;q)_n}{(q;q)_{n-k}(q;q)_{n+k+1}} 
\end{equation}
Keep in mind that the above definition of $c_{p,n}$ differs by a power
of $q$ from the one given in~\cite[Thm.3.2]{Masbaum}. With our
definition, we have $c_{-1,n}(q)=1$ and $c_{1,n}(q)=(-1)^n q^{n(n+3)/2}$.
In \cite{GS1} it was shown that for each integer~$p$, the sequence
$c_{p,n}(q)$ satisfies a monic recurrence of order~$|p|$ with initial
conditions $c_{p,n}(q)=0$ for $n<0$ and $c_{p,0}(q)=1$. In particular,
for $p=2$ we have:
\[
  c_{2,n+2}(q) + q^{n+3}(1+q-q^{n+2}+q^{2n+4})c_{2,n+1}(q) +
  q^{2n+6}(1-q^{n+1})c_{2,n}(q) = 0 \,.
\] 

Now, $K_{2,2}=7_4$. The first few values of the colored Jones polynomial
$f_n(q):=J_{7_4,n}(q)$ are given by
\begin{alignat*}2
f_1(q) & = & \, & 1 \\
f_2(q) & = && q-2 q^2+3 q^3-2 q^4+3 q^5-2 q^6+q^7-q^8 \\
f_3(q) & = && q^2-2 q^3+q^4+4 q^5-6 q^6+2 q^7+6 q^8-9 q^9+3 q^{10}+7 q^{11}-8 q^{12}+q^{13}+7 q^{14}-{}\\
&&& 7q^{15}-q^{16}+5 q^{17}-4 q^{18}-q^{19}+3 q^{20}-q^{21}-q^{22}+q^{23} \\
f_4(q) & = && q^3-2 q^4+q^5+2 q^6-4 q^8+q^9+6 q^{10}-2 q^{11}-8 q^{12}+5 q^{13}+9 q^{14}-4 q^{15}-13 q^{16}+{}\\
&&& 7 q^{17}+11 q^{18}-3 q^{19}-15 q^{20}+6 q^{21}+11 q^{22}-q^{23}-13 q^{24}+q^{25}+10 q^{26}+2 q^{27}-{}\\
&&& 11 q^{28}-3 q^{29}+9 q^{30}+3 q^{31}-7 q^{32}-5 q^{33}+7 q^{34}+4 q^{35}-3 q^{36}-5 q^{37}+3 q^{38}+{}\\
&&& 3 q^{39}-3 q^{41}+q^{43}+q^{44}-q^{45}
\end{alignat*}
The above data agrees with the {\tt KnotAtlas}~\cite{B-N}.


\section{Computing a recurrence for the colored Jones polynomial of $7_4$}
\lbl{sec.compute}


We employ the definition of $J_{K_{p,p'},n}(q)$ given in~\eqref{eq.CJpp'}
and~\eqref{eq.cp} in terms of definite sums to compute a recurrence
for the colored Jones polynomial of~$7_4=K_{2,2}$. Thanks to
Zeilberger's \emph{holonomic systems approach}~\cite{Zeilberger90}
this task can be executed in a completely automatic fashion, e.g.,
using the algorithms implemented in the Mathematica package
\texttt{HolonomicFunctions}~\cite{Ko2}, see~\cite{Ko1} for more details.
The summation problem in~\eqref{eq.cp} can be tackled by a
$q$-analogue of Zeilberger's fast summation
algorithm~\cite{Zeilberger90a,Zeilberger91,WilfZeilberger92,PauleRiese97}
since the summand is a proper $q$-hypergeometric term.

As it was mentioned above, the sequence $c_{p,n}(q)$ satisfies a
recurrence of order~$|p|$ and therefore the summand
of~\eqref{eq.CJpp'} is not $q$-hypergeometric in general. Thus we
apply Chyzak's generalization~\cite{Chyzak00} of Zeilberger's
algorithm to derive a recurrence for~$J_{7_4,n}(q)$.

Both algorithms are based on the concept of \emph{creative
  telescoping}~\cite{Zeilberger91}, see~\cite{Ko1} for an introduction
and~\cite{GS2} for an earlier application to the computation of
non-commutative $A$-polynomials.  Let $f_{n,k}(q)$ denote the summand
of~\eqref{eq.CJpp'}. Our implementation of Chyzak's algorithm yields the equation
\[
  P_{7_4}(f_{n,k}) = c_d(q,q^n)f_{n+d,k}+\dots+c_0(q,q^n)f_{n,k} = g_{n,k+1}-g_{n,k}
\]
where $g_{n,k}$ is a $\BK(q,q^k,q^n)$-linear combination of certain shifts
of~$f$ (e.g., $f_{n,k}$, $f_{n+1,k}$, $f_{n,k+1}$, etc.).  Now creative
telescoping is executed by summing this equation with respect to~$k$.
It follows that $P_{7_4}(J_{7_4,n}) = g_{n,n}-g_{n,0} = b_{7_4}$.

The summation problems~\eqref{eq.CJpp'} and~\eqref{eq.cp} for
$p=p'=2$ are of moderate size: our
software \texttt{HolonomicFunctions} computes the solution in less
than 2 minutes. The result is given in Appendix~\ref{app.74}.


\section{Irreducibility of $q$-difference operators}
\lbl{sec.irred}

An element $P \in \BW$ is {\em irreducible} if it cannot be written in the
form $P=Q R$ with $Q,R \in \BW$ of positive $L$-degree. Since there is a (left
and right) division algorithm in~$\BW$, it follows that every element $P$ is a
finite product of irreducible elements.  However, it can happen that $P$ can
be factored in different ways, but any two factorizations of $P$ into
irreducible elements are related in a specific way; see \cite{Ore}.

A factorization algorithm for elements of the {\em localized Weyl algebra}
$\BK(x)\la \pt \ra$ where $\pt x - x \pt =1$ has been discussed by several
authors that include \cite{Schwarz,Tsarev,Bronstein} and also
\cite[Sec.8]{Hoeij1} and \cite{vPS}; the factorization of more general Ore
operators (including differential, difference, and $q$-difference) has been
investigated in~\cite{BronsteinPetkovsek96}.  Roughly, a factorization
algorithm for $P \in \BK(x)\la \pt \ra$ of order $d$ (as a linear
differential operator) proceeds as follows: if $P=QR$ where $R$ is of
order~$k$, then the coefficients of $R$ can be computed by finding the right
factors of order~$1$ of the associated equation obtained by the $k$-th
exterior power of~$P$. The problem of finding linear right factors can be
solved algorithmically.

For our purposes we do not require a full factorization algorithm, but only
criteria for certifying the irreducibility of $q$-difference operators.
Consider $P(q,M,L) \in \BW$ and assume that the leading coefficient of~$P$
does not vanish when specialized to $q=1$.  The following is an algorithm for
certifying irreducibility of~$P$:
\renewcommand{\labelenumi}{\arabic{enumi}.}
\begin{enumerate}
\item If $P(1,M,L) \in \BK(M)[L]$ is irreducible, then $P$ is irreducible (see
  Section~\ref{sub.easy}).
\item If not, factor the commutative polynomial $P(1,M,L)$ into irreducible factors $P_1\cdots P_n$.
\item For each $k=\sum_{i\in I}\deg_L(P_i)$ such that
  $I\subseteq\{1,\dots,n\}$ compute the exterior power $\bigwedge^{\!k}\!P \in
  \BW$ (see Section~\ref{sub.adams}).
\item Apply the algorithm qHyper (e.g., in its improved version described in
  Section~\ref{sec.irr74}), to show that none of the computed exterior powers
  has a linear right factor. Then $P$ is irreducible.
\end{enumerate}

\subsection{An easy sufficient criterion for irreducibility}
\lbl{sub.easy}

In this section we mention an easy irreducibility criterion in~$\BW$,
which is sufficient but not necessary, as we shall see. This criterion
has been used in~\cite{GS2} to compute the non-commutative
$A$-polynomial of twist knots, and also in~\cite{Le,LeTran} to
verify the AJ conjecture in some cases.

To formulate the criterion, we will use the Newton polygon at $q=1$,
in analogy with the Newton polygon at $q=0$ studied in~\cite{Ga3}.
Expanding a rational function $a(q,M) \in \BK(q,M)$ into a formal
Laurent series in $q-1$, let $v(a(q,M)) \in \BZ \cup\{\infty\}$ denote
the lowest power of $q-1$ which has nonzero coefficient. It can be easily
verified that $v$ is a {\em valuation}, i.e., it satisfies
\[
  v(ab)=v(a)+v(b), \qquad v(a+b) \geq \min(v(a),v(b)) \, .
\]

\begin{definition}
\lbl{def.newton}
For an operator $P(q,M,L)=\sum_{j=0}^d a_j(q,M)L^j \in \BW$ the
{\em Newton polygon}~$N(P)$ is defined to be the lower convex hull 
(see \cite{LRS}) of
the set $\{(j,v(a_j)) \mid j=0,\dots,d\}$. Furthermore, let $N^e(P)$
denote the union of the (non-vertical) boundary line segments
of~$N(P)$.
\end{definition}
For instance, if
$P=(q-1)^2 L^5 + ((q-1)(Mq-1))^{-1} L^3 + L^2 + (q-1)^{-1}L+1$,
then $N(P)$ is given by
$$\psdraw{newton}{2in}$$
and $N^e(P)$ is the path of straight line segments connecting the
points $(0,0)$, $(1,-1)$, $(3,-1)$, and~$(5,2)$.  The next lemma is
elementary; it follows easily from the definitions.
Recall \cite{Ziegler} that the {\em Minkowski sum} $A+B$ of two 
polytopes $A$ and $B$ is the convex hull of the set $\{a+b \mid a \in A, \, 
b \in B\}$.

\begin{lemma}
\lbl{lem.newton}
If $Q,R \in \BW$, then $N(QR)=N(Q)+N(R)$.
\end{lemma}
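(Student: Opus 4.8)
The plan is to reduce the stated identity of polygons to an identity of their support functions under the valuation $v$, and to control how the $q$-commutation $LM=qML$ interacts with $v$. Writing $Q=\sum_i b_i(q,M)L^i$ and $R=\sum_j c_j(q,M)L^j$ with $b_i,c_j\in\BK(q,M)$, the relation $LM=qML$ gives $L^i a(q,M)=a(q,q^iM)L^i$, so the coefficient of $L^m$ in $QR$ is $e_m=\sum_{i+j=m}b_i(q,M)\,c_j(q,q^iM)$. The first fact I would record is that $v$ is invariant under the substitution $M\mapsto q^iM$: expanding $c_j(q,M)=\sum_{n\geq v(c_j)}d_n(M)(q-1)^n$ with $d_{v(c_j)}\neq0$ and using $q^iM=M+O(q-1)$ shows that the lowest surviving power of $q-1$ is unchanged, so $v(c_j(q,q^iM))=v(c_j(q,M))$. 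Consequently each individual product $b_i(q,M)\,c_j(q,q^iM)$ has valuation exactly $v(b_i)+v(c_j)$.

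For a real parameter $t$ and $P=\sum_j a_jL^j\in\BW$ set $\mu_t(P)=\min_j\bigl(v(a_j)+tj\bigr)$. This is the support function of the lower boundary of $N(P)$ in the direction of slope $-t$; two lower convex hulls coincide precisely when their functions $\mu_t$ agree for all $t$, and $\mu_t(Q)+\mu_t(R)$ is the support function of the Minkowski sum $N(Q)+N(R)$. Thus it suffices to prove $\mu_t(QR)=\mu_t(Q)+\mu_t(R)$ for every $t$. One inequality is immediate from the valuation axioms: since $v(e_m)\geq\min_{i+j=m}(v(b_i)+v(c_j))$ by shift-invariance, we obtain
\[
\mu_t(QR)=\min_m\bigl(v(e_m)+tm\bigr)\geq\min_{i,j}\bigl((v(b_i)+ti)+(v(c_j)+tj)\bigr)=\mu_t(Q)+\mu_t(R).
\]

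The reverse inequality is the main obstacle, as it amounts to ruling out cancellation of the lowest-order terms in $q-1$ when the summands of $e_m$ are combined. I would handle it by a genericity argument. For $t$ outside the finite set of critical slopes $(v(b_i)-v(b_{i'}))/(i'-i)$ (and the analogous quantities for $R$), the minima defining $\mu_t(Q)$ and $\mu_t(R)$ are attained at unique indices $i_0$ and $j_0$. Setting $m=i_0+j_0$, any other decomposition $i+j=m$ satisfies $i\neq i_0$ and $j\neq j_0$, hence $v(b_i)+ti>\mu_t(Q)$ and $v(c_j)+tj>\mu_t(R)$ strictly, so $v(b_i)+v(c_j)>v(b_{i_0})+v(c_{j_0})$. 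The term indexed by $(i_0,j_0)$ therefore has strictly smallest valuation among all summands of $e_m$, and because $v$ is a valuation it cannot be cancelled, giving $v(e_m)=v(b_{i_0})+v(c_{j_0})$. Hence $\mu_t(QR)\leq v(e_m)+tm=\mu_t(Q)+\mu_t(R)$, and equality holds for all generic $t$. Finally, since each $\mu_t$ is piecewise linear and continuous in $t$, equality on a dense set extends to all $t$, and therefore $N(QR)=N(Q)+N(R)$.
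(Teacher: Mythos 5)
Your proof is correct. The paper offers no argument for Lemma~\ref{lem.newton} at all---it is dismissed as ``elementary; it follows easily from the definitions''---so there is nothing to compare line by line; what you have written is a complete and valid filling-in of that omission. You correctly isolate the only two points where the statement could fail. First, the non-commutativity of $\BW$ enters solely through the twist $L^i c(q,M)=c(q,q^iM)L^i$, and your observation that $v$ is invariant under $M\mapsto q^iM$ (because $q^iM=M+O(q-1)$; more formally, writing $c=(q-1)^s f/g$ with $f(1,M)\neq 0\neq g(1,M)$, the substitution preserves this normal form) reduces the problem to the commutative Newton-polygon calculus. Second, possible cancellation of lowest-order terms in $q-1$, which you rule out by the standard Dumas/Ostrowski-style device: for generic slope $t$ the minimizers $i_0,j_0$ are unique, any other splitting $i+j=i_0+j_0$ has $i\neq i_0$ \emph{and} $j\neq j_0$, so the summand $b_{i_0}(q,M)\,c_{j_0}(q,q^{i_0}M)$ of $e_{i_0+j_0}$ has strictly minimal valuation and survives. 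Your passage through the support functions $\mu_t$, with equality at generic $t$ extended by piecewise-linear continuity, is a clean way to recover equality of the polygons; an equivalent and slightly more common phrasing argues directly at each vertex of $N(Q)+N(R)$, using that every vertex of a Minkowski sum decomposes uniquely as a sum of vertices of the summands---the content is identical. Two cosmetic remarks, neither affecting correctness: the statement implicitly assumes $Q,R\neq 0$, and the term-by-term substitution $M\mapsto q^iM$ in the Laurent expansion deserves the one-line remark that each fixed power of $q-1$ receives contributions from only finitely many terms (or simply use the $(q-1)^s f/g$ factorization instead).
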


\begin{proposition}
\lbl{prop.easy}
Let $P(q,M,L)=\sum_{j=0}^d a_j(q,M)L^j \in \BW$ with $d>1$ and assume
that $P(1,M,L) \in \BK(M)[L]$ is well-defined and irreducible with
$a_0(1,M)a_d(1,M) \neq 0$. Then $P(q,M,L)$ is irreducible in~$\BW$.
\end{proposition}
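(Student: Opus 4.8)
The plan is to argue by contradiction: I assume $P=QR$ in $\BW$ with both $Q$ and $R$ of positive $L$-degree, and from this produce a nontrivial factorization of $P(1,M,L)$ in $\BK(M)[L]$, contradicting the irreducibility hypothesis. First I would extract what the hypotheses say about the Newton polygon $N(P)$. Since $P(1,M,L)$ is well-defined, we have $v(a_j)\geq 0$ for every $j$, and since $a_0(1,M)\,a_d(1,M)\neq 0$ we have $v(a_0)=v(a_d)=0$. Thus every vertex $(j,v(a_j))$ lies on or above the $x$-axis, while the two extreme vertices $(0,0)$ and $(d,0)$ lie on it; by convexity this forces $N^e(P)$ to be exactly the horizontal segment from $(0,0)$ to $(d,0)$.

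Next, I write $Q=\sum_{i=0}^e b_iL^i$ and $R=\sum_{j=0}^f c_jL^j$ with $e,f\geq 1$. The $L$-degrees add, $e+f=d$, because the coefficient of $L^{e+f}$ in $QR$ is $b_e(M)\,c_f(q^eM)$, a product of nonzero elements of the field $\BK(q,M)$, so no cancellation occurs. I then invoke Lemma~\ref{lem.newton} to get $N(P)=N(Q)+N(R)$. The Minkowski sum of two lower-convex lattice paths has, as its ordered list of edge slopes, the sorted merge of the slopes of the two summands; since $N^e(P)$ consists of a single slope-$0$ edge, both $N^e(Q)$ and $N^e(R)$ must themselves be single horizontal edges, say at heights $\beta$ and $\gamma$. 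Comparing the left endpoints of the Minkowski sum, $(0,\beta)+(0,\gamma)=(0,0)$, yields $\beta+\gamma=0$, and flatness gives $\beta=\min_i v(b_i)=v(b_0)=v(b_e)$ and likewise $\gamma=\min_j v(c_j)=v(c_0)=v(c_f)$.

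Finally I normalize and specialize. Because $q$, and hence $q-1$, is central in $\BW$, I may replace $Q,R$ by $\hat Q=(q-1)^{-\beta}Q$ and $\hat R=(q-1)^{-\gamma}R$; these still satisfy $\hat Q\hat R=P$ since $\beta+\gamma=0$, and now all their coefficients have nonnegative valuation with nonvanishing leading and trailing coefficients at $q=1$. Setting $q=1$ is a ring homomorphism from the subring of $\BW$ with $q$-regular coefficients onto the commutative ring $\BK(M)[L]$, since the relation $LM=qML$ degenerates to commutativity; it therefore sends $P=\hat Q\hat R$ to $P(1,M,L)=\hat Q(1,M,L)\,\hat R(1,M,L)$, a factorization into factors of positive $L$-degrees $e$ and $f$. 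This contradicts the assumed irreducibility of $P(1,M,L)$, completing the argument.

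I expect the main obstacle to be the step passing from ``$N^e(P)$ is a single slope-$0$ edge and $N(P)=N(Q)+N(R)$'' to ``each of $N^e(Q),N^e(R)$ is a single slope-$0$ edge with heights summing to zero.'' This is the only place where the convex-geometric force of Lemma~\ref{lem.newton} is genuinely used, and one must argue carefully that a Minkowski sum of lower hulls collapses to one horizontal edge precisely when both summands are already horizontal. A secondary point requiring care is checking that specialization at $q=1$ is honestly multiplicative, which rests on the commutation $L^i c(M)=c(q^iM)L^i$ becoming trivial at $q=1$.
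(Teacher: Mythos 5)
Your proposal is correct and takes essentially the same route as the paper's proof: derive that $N^e(P)$ is the single horizontal segment from $(0,0)$ to $(d,0)$, use Lemma~\ref{lem.newton} to force $N^e(Q)$ and $N^e(R)$ to be single horizontal segments, normalize the factors by a power of $(q-1)$, and specialize at $q=1$ to obtain a contradiction with irreducibility. You merely spell out two steps the paper leaves implicit (the sorted-merge behavior of slopes under Minkowski sums and the multiplicativity of the $q=1$ specialization), which is sound.
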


\begin{proof}
The assumptions imply that $N^e(P)$ is the horizontal line segment
from the origin to $(\deg_L(P),0)$.  If $P=QR$ with
$\deg_L(Q)\deg_L(R)\neq0$, then Lemma~\ref{lem.newton} implies that
both $N^e(Q)$ and $N^e(R)$ consist of a single horizontal segment as
well.  Without loss of generality, assume that the leading coefficient
of~$Q$ has valuation zero; if not we can multiply $Q$ by $(1-q)^a$ and
$R$ by $(1-q)^{-a}$ for an appropriate integer~$a$. Then, it follows that
$N^e(Q)=[0,\deg_L(Q)]\times 0$ and $N^e(R)=[0,\deg_L(R)]\times 0$.
Evaluating at $q=1$, it follows that $Q(1,M,L),
R(1,M,L) \in \BK(M)[L]$ are well-defined and
$P(1,M,L)=Q(1,M,L)R(1,M,L)$ where $Q(1,M,L)$ and $R(1,M,L)$ are of
$L$-degree $\deg_L(Q)$ and $\deg_L(R)$ respectively. This contradicts
the assumption that $P(1,M,L)$ is irreducible and completes the proof.
\end{proof}

\subsection{Adams operations on $\BW$-modules}
\lbl{sub.adams}

In this section we introduce Adams (i.e., exterior power) operations 
on finitely generated~$\BW$-modules. The Adams operations were inspired 
by the Weyl algebra setting, and play an important role in irreducibility
and factorization of elements in~$\BW$.

To begin with, a finitely generated left $\BW$-module $\calM$ is a direct sum 
of a free module of finite rank and a cyclic torsion module. The proof of this
statement for $\BW$ is identical to the proof for modules over the Weyl
algebra, discussed for example in Lemma 2.5 and Proposition 2.9 of
\cite{vPS}.

Consider a torsion $\BW$-module $\calM$ with generator~$f$. We will write this
by $(\calM,f)$, following the notation of \cite[Sec.2.3]{vPS}.
$f$ is often called a {\em cyclic vector} for~$\calM$. It follows that 
$\calM=\BW/(\BW P)$ where $P$ is a generator of the left annihilator ideal 
$\ann(f):=\{Q \in \BW \mid Q f=0\}$ of~$f$. 

\begin{definition}
\lbl{def.extM}
For a natural number~$k$, we define the $k$-th exterior power of $(\calM,f)$
by
\[
  \textstyle{\bigwedge^{\!k}}(\calM,f)=(\textstyle{\bigwedge^{\!k}}\calM, f \wedge Lf \wedge \dots \wedge L^{k-1}f)
\]
If $P=\ann(f)$, then we define 
$\bigwedge^{\!k}\!P:=\ann(f \wedge Lf \wedge \dots \wedge L^{k-1}f)$.
\end{definition}
The next lemma is an effective algorithm to compute $\bigwedge^{\!k}\!P$.
Recall the {\em shifted analogue of the Wronskian}
(also called {\em Casoratian}) of~$k$ sequences 
$f^{(i)}_{n}$ for $i=1,\dots,k$ given by
\begin{equation}\lbl{eq.wronskian}
  W\big(f^{(1)},\dots,f^{(k)}\big)_n =
  \det_{\genfrac{}{}{0pt}{}{0 \leq j \leq k-1}{1 \leq i \leq k}} f^{(i)}_{n+j}.
\end{equation}

\begin{lemma}
\lbl{lem.1}
Let $P\in\BW$ and $f^{(1)}_n, \dots, f^{(k)}_n$ be $k$ linearly
independent solutions of the equation $Pf=0$. Then $\bigwedge^{\!k}\!P$ is
the minimal-order operator in $\BW$ which annihilates the sequence
$w_n=W\big(f^{(1)},\dots,f^{(k)}\big)_n$. In particular, there is a
unique such solution (up to left multiplication by elements from $\BK(q,M)$).
\end{lemma}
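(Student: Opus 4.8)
The plan is to realize the abstract exterior power of Definition~\ref{def.extM} concretely through a ``Casoratian pairing'' and then to split the statement into the two inclusions $\ann(\omega)\subseteq\ann(w)$ and $\ann(w)\subseteq\ann(\omega)$, where I abbreviate $\omega=f\wedge Lf\wedge\dots\wedge L^{k-1}f$ and $w=W(f^{(1)},\dots,f^{(k)})$. First I would record the structural facts that make the statement well posed. Writing $\calM=\BW/\BW P$ with cyclic vector $f=\bar1$ and $d=\deg_L(P)$, the set $\{f,Lf,\dots,L^{d-1}f\}$ is a $\BK(q,M)$-basis of $\calM$, so $\dim_{\BK(q,M)}\bigwedge^{\!k}\!\calM=\binom dk<\infty$. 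Since the $f^{(i)}$ are linearly independent solutions we have $k\le d$, hence $\omega\neq0$; being a torsion element of a finite-dimensional module, $\omega$ has a nonzero annihilator, and because $\BW=\BK(q,M)[L;\sigma]$ with $\sigma\colon M\mapsto qM$ is a left-Euclidean Ore domain, $\ann(\omega)$ is a principal left ideal whose generator is unique up to left multiplication by $\BK(q,M)^{\times}$. This generator is exactly $\bigwedge^{\!k}\!P$, and it simultaneously yields the ``unique up to left multiplication'' clause.

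Next I would build the pairing. Each solution $g=f^{(i)}$ gives a $\BW$-module map $\phi_i\colon\calM\to S$ into a suitable module $S$ of sequences (with $L,M$ acting as shift and as multiplication by $q^n$), determined by $\phi_i(f)=f^{(i)}$; it is well defined precisely because $Pf^{(i)}=0$, and $\phi_i(L^jf)_n=f^{(i)}_{n+j}$. I then define $\Psi\colon\bigwedge^{\!k}\!\calM\to S$ on decomposables by $\Psi(m_1\wedge\dots\wedge m_k)_n=\det_{1\le a,b\le k}\big[\phi_b(m_a)_n\big]$; this is alternating in the $m_a$, hence well defined, and a direct check shows it commutes with $L$ (shifting all rows) and with $M$ (scaling one column by $q^n$, hence the determinant by $q^n$), so $\Psi$ is a $\BW$-homomorphism. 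Evaluating on $\omega$ gives $\Psi(\omega)_n=\det[f^{(b)}_{n+a-1}]=w_n$ (up to transposing the defining matrix). Consequently any $Q$ with $Q\omega=0$ satisfies $Qw=\Psi(Q\omega)=0$, which is the inclusion $\ann(\omega)\subseteq\ann(w)$; in particular $\bigwedge^{\!k}\!P$ annihilates $w$, and $\deg_L(\bigwedge^{\!k}\!P)$ is an upper bound for the order of the minimal annihilator of $w$.

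The remaining, and genuinely harder, point is minimality: the reverse inclusion $\ann(w)\subseteq\ann(\omega)$, equivalently the injectivity of $\Psi$ on the cyclic module $\BW\omega=\Span_{\BK(q,M)}\{\omega,L\omega,\dots,L^{r-1}\omega\}$ with $r=\deg_L(\bigwedge^{\!k}\!P)$. Since $\Psi(L^j\omega)=L^jw=w_{n+j}$, this amounts to showing that $w,Lw,\dots,L^{r-1}w$ are $\BK(q,M)$-linearly independent, i.e.\ that $w$ satisfies no recurrence of order $<r$. I would prove this by passing to a Picard--Vessiot extension of $(\BK(q,M),\sigma)$ for $P$, where $P$ has a solution space of dimension exactly $d$: the classical Casoratian criterion then gives $w\neq0$ for linearly independent $f^{(i)}$, and faithfulness of the solution functor on the cyclic module $\BW\omega$, together with the fact that the $f^{(i)}$ extend to a full basis, identifies $\BW\omega$ with $\BW w$ and forces equality of the two orders. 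I expect this to be the main obstacle: a priori a special (non-generic) choice of linearly independent solutions could lie in the solution space of a proper right factor of $\bigwedge^{\!k}\!P$ and hence admit a lower-order recurrence, so the argument must genuinely use that linearly independent solutions make $\omega$ a cyclic vector that $\Psi$ cannot collapse. Establishing this cyclicity and faithfulness cleanly, rather than the easy divisibility direction, is where the real content of the lemma lies.
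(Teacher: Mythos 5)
Your structural setup and the divisibility direction are correct, and they are essentially the paper's own argument in cleaner form: your pairing $\Psi$ is precisely the paper's correspondence $\phi\colon v_I\mapsto w_I$ (with $v_I=L^{i_1-1}f\wedge\dots\wedge L^{i_k-1}f$ and $w_I$ the Casoratians of the $k$-subsets of a fundamental system), upgraded to an explicit $\BW$-homomorphism; the paper derives $L^jv=\sum_{I}a_{j,I}v_I$ and $L^jw=\sum_{I}a_{j,I}w_I$ with the same coefficients $a_{j,I}$ and concludes, as you do, that $\bigwedge^{\!k}\!P$ annihilates $w=W\big(f^{(1)},\dots,f^{(k)}\big)$.

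The gap you flagged in the minimality direction is genuine, and your proposed repair via a Picard--Vessiot extension and ``faithfulness of the solution functor'' cannot succeed, because the minimality claim is false for special tuples of linearly independent solutions --- exactly the scenario you worried about. Take $k=1$ and $P=(L-b)(L-a)$: then $\bigwedge^{\!1}\!P=P$ has order two, yet a nonzero solution $f^{(1)}$ of $(L-a)f=0$ is a linearly independent solution of $Pf=0$ whose Wronskian $w=f^{(1)}$ is annihilated by the first-order operator $L-a$. More generally, whenever $P=QR$ with $\deg_L(R)=k$, Lemma~\ref{lem.2} applied to a fundamental system of $R$ produces a first-order annihilator of $w$, while $\bigwedge^{\!k}\!P$ typically has order $\binom{d}{k}>1$; this is precisely the mechanism exploited in Theorem~\ref{thm.factor}. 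So $\Psi$ need not be injective on $\BW\omega$, and no faithfulness argument can force it to be: minimality holds only for a generic choice of the $k$ solutions, equivalently when the values $f^{(i)}_{n+j}$, $0\le j<d$, are treated as independent indeterminates --- which is the symbolic setting of the algorithm in Corollary~\ref{cor.alg.lem1}, where coefficient comparison is performed with respect to the products $\prod_{i=1}^k f^{(i)}_{n+j_i}$. For what it is worth, the paper's own proof silently makes the same assumption when it transfers the ``minimal dependency'' from the translates of $v$ to the translates of $w$: that step needs the $w_I$ to be $\BK(q,q^n)$-linearly independent as sequences, which is true generically but fails in the example above. Note finally that the paper only ever uses the inclusion $\ann(\omega)\subseteq\ann(w)$ that you did establish: Theorem~\ref{thm.factor} needs just that $w$ lies in the solution space of $\bigwedge^{\!k}\!P$.
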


\begin{proof}
Let $d=\deg_L(P)$ and fix $k \leq d$. Let 
$\calI=\{(i_1,\dots,i_k) \mid 1 \leq i_1 < i_2 < \dots < i_k \leq d \}$.
$\bigwedge^{\!k}\!\calM$ has a basis $\{ v_I \mid I \in \calI\}$ where
$v_I=L^{i_1-1}f \wedge L^{i_2-1}f \dots \wedge L^{i_k-1}f$
for $I=(i_1,\dots,i_k) \in \calI$. Now, using the fact that $Pf=0$, it
follows that for $v\in\calM$ and every natural number~$j$ we have 
$$
L^jv=\sum_{I \in \calI} a_{j,I} v_I
$$
where the $a_{j,I}$ are 
rational functions in $q$ and~$q^n$.
It follows that the set $\{L^j v \mid j=0,\dots,\binom{d}{k}\}$ is linearly
dependent.  A minimal dependency of this set gives rise to the operator
$\bigwedge^{\!k}\!P$ by definition of the latter.

To prove the lemma, choose a fundamental set of solutions
$f^{(i)}$ for $i=1,\dots,d$ to the recurrence equation $Pf=0$ and consider
the correspondence
$$
\phi: \{ v_I \mid I \in \calI\} \longto \{ w_I \mid I \in \calI \},
\qquad \phi(v_I)=w_I
$$
where 
$w_I=W\big(f^{(i_1)},\dots, f^{(i_k)}\big)$ for 
$I=(i_1,\dots,i_k) \in \calI$. The above correspondence is invariant with
respect to the $L$-action since
\[
  L W\big(g^{(i_1)},\dots,g^{(i_k)}\big) = W\big(L g^{(i_1)},\dots,L g^{(i_k)}\big).
\]
It follows that for all natural numbers~$j$ we have
\[
  L^j w=\sum_{I \in \calI} a_{j,I} w_I
\]
where $w=\phi(f\wedge Lf \wedge\dots\wedge L^{k-1}f)=W\big(f^{(1)},\dots,f^{(k)}\big)$.
Since $\bigwedge^{\!k}\!P$ is the operator that encodes the minimal dependency
among the translates of~$v$, it follows that it is also the operator that
encodes the minimal dependency among the translates of~$w$.
The result follows.
\end{proof}

\begin{corollary}
\lbl{cor.alg.lem1}
Lemma~\ref{lem.1} gives the following algorithm to 
compute $\bigwedge^{\!k}\!P$: The
definition of~$w_n$ as a determinant together with the equations
$Pf^{(i)}=0$ for $i=1,\dots,k$ allows to express $w_{n+\ell}$ for
arbitrary $\ell\in\BN$ as a $\BK(q,q^n)$-linear combination of the
products $\prod_{i=1}^k f^{(i)}_{n+j_i}$ where $0\leq j_i<\deg_L(P)$
for $1\leq i\leq k$. This allows to determine the minimal~$\ell$ such
that $w_{n},\dots,w_{n+\ell}$ are $\BK(q,q^n)$-linearly dependent.
Compare also with \cite[Example 2.29]{vPS}. 
\end{corollary}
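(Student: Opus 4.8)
The plan is to verify the two algorithmic assertions contained in the statement. Write $d:=\deg_L(P)$ and recall from the proof of Lemma~\ref{lem.1} that we may assume $k\le d$. The first assertion is that each shifted Casoratian $w_{n+\ell}$ is a $\BK(q,q^n)$-linear combination of the $d^k$ products $\prod_{i=1}^k f^{(i)}_{n+j_i}$ with $0\le j_i<d$; the second is that a minimal such dependency among $w_n,w_{n+1},\dots$ yields $\bigwedge^{\!k}\!P$. The second assertion is essentially immediate from Lemma~\ref{lem.1}: a relation $\sum_{\ell} b_\ell(q,q^n)\,w_{n+\ell}=0$ with $b_\ell\in\BK(q,q^n)=\BK(q,M)$ is the same datum as an operator $\sum_\ell b_\ell(q,M)L^\ell\in\BW$ annihilating $w=W\big(f^{(1)},\dots,f^{(k)}\big)$, and a minimal-length dependency corresponds to the minimal-order annihilator, which Lemma~\ref{lem.1} identifies with $\bigwedge^{\!k}\!P$.

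For the first assertion I would argue directly from the determinant formula~\eqref{eq.wronskian}. Expanding it by the Leibniz rule writes $w_n$ as a signed sum of products $\prod_{i=1}^k f^{(i)}_{n+\sigma(i)}$ over permutations $\sigma$ of $\{0,\dots,k-1\}$, so that all shifts already lie in $[0,k-1]\subseteq[0,d-1]$ and $w_n$ itself has the required form. Applying $n\mapsto n+\ell$ produces products whose shifts lie in the range $[\ell,\ell+k-1]$. To bring these back into range, I would invoke the recurrence: writing $P=\sum_{j=0}^d a_j(q,M)L^j$ and using that the leading coefficient $a_d(q,M)$ is invertible in the fraction field, the relation $Pf^{(i)}=0$ gives
\[
  f^{(i)}_{n+d}=-a_d(q,q^n)^{-1}\sum_{j=0}^{d-1}a_j(q,q^n)\,f^{(i)}_{n+j},
\]
and its $n$-shifts express any $f^{(i)}_{n+m}$ with $m\ge d$ as a $\BK(q,q^n)$-linear combination of $f^{(i)}_n,\dots,f^{(i)}_{n+d-1}$; here all coefficients stay inside $\BK(q,q^n)$ because $q^{n+s}=q^sq^n$. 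Substituting these reductions into each factor and multiplying out rewrites $w_{n+\ell}$ as a $\BK(q,q^n)$-linear combination of the claimed $d^k$ products.

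Once the first assertion is in place, finiteness does the rest: the sequence $(w_{n+\ell})_{\ell\ge0}$ takes values in a $\BK(q,q^n)$-vector space of dimension at most $d^k$, so a dependency among $w_n,\dots,w_{n+\ell}$ must occur for some $\ell\le d^k$, and the minimal such $\ell$ is found by row reduction over $\BK(q,q^n)$ on the coordinate vectors of the $w_{n+\ell}$ relative to the $d^k$ products. Combined with the second assertion, this produces $\bigwedge^{\!k}\!P$ and proves the corollary. The only point requiring care is the reduction step: one must check that iterating the recurrence terminates and keeps every coefficient in $\BK(q,q^n)$. This is guaranteed since each application strictly lowers the largest shift that appears and the shifted coefficients $a_j(q,q^{n+s})$ are themselves rational in $q^n$, so no genuine difficulty arises beyond careful bookkeeping.
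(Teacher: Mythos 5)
Your proposal is correct and follows essentially the same route as the paper, which states this corollary without a separate proof, treating it as immediate from Lemma~\ref{lem.1}: your Leibniz expansion of the Casoratian, reduction of high shifts via $Pf^{(i)}=0$, and linear algebra over $\BK(q,q^n)$ is exactly the procedure the paper itself carries out concretely in Section~\ref{sec.irr74} via the ansatz~\eqref{eq.ansatz} and coefficient comparison. Your only loose point --- equating dependence of coordinate vectors with dependence of the sequences $w_{n+\ell}$ --- is resolved precisely by your appeal to Lemma~\ref{lem.1}, which identifies the operator found this way with the minimal annihilator of $w$, so no gap remains.
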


\begin{lemma}
\lbl{lem.2}
Let $P\in\BW$ be of the form $P=L^d+\sum_{j=0}^{d-1}a_jL^j$ with
$a_0\neq0$, and let $\big\{f^{(1)}_n,\dots,f^{(d)}_n\big\}$ be a
fundamental solution set of the equation $Pf=0$. Then
$w_{n+1}-(-1)^da_0w_n=0$ where $w=W(f^{(1)},\dots,f^{(d)})$.
\end{lemma}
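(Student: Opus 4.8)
The plan is to compute $w_{n+1}$ directly from the determinant in~\eqref{eq.wronskian} and to reduce it to a scalar multiple of $w_n$ by elementary row operations, in exact parallel with the classical Abel (Wronskian) identity for linear ODEs. First I would record that $w_n=\det(R_n,R_{n+1},\dots,R_{n+d-1})$, where $R_m$ denotes the row $(f^{(1)}_m,\dots,f^{(d)}_m)$, so that
\[
  w_{n+1}=\det\big(R_{n+1},R_{n+2},\dots,R_{n+d}\big).
\]
The rows of this matrix carry the shifts $n+1,\dots,n+d$, and among them only the bottom row $R_{n+d}$ fails to occur in the matrix defining $w_n$.

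The key step exploits that each solution satisfies the monic recurrence, evaluated at the base point~$n$:
\[
  (Pf^{(i)})_n=f^{(i)}_{n+d}+\sum_{j=0}^{d-1}a_j(q,q^n)f^{(i)}_{n+j}=0.
\]
Since the scalars $a_j(q,q^n)$ do not depend on the column index~$i$, this gives a genuine linear relation among rows, namely $R_{n+d}=-\sum_{j=0}^{d-1}a_j(q,q^n)R_{n+j}$. The rows $R_{n+1},\dots,R_{n+d-1}$ already appear in the matrix for $w_{n+1}$, so by the invariance of the determinant under adding a multiple of one row to another I may discard their contribution to $R_{n+d}$; what survives in the bottom row is precisely $-a_0(q,q^n)R_n$.

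Factoring the scalar $-a_0(q,q^n)$ out of the bottom row yields
\[
  w_{n+1}=-a_0(q,q^n)\det\big(R_{n+1},\dots,R_{n+d-1},R_n\big).
\]
A single cyclic permutation moving the last row $R_n$ to the top restores the natural order $R_n,R_{n+1},\dots,R_{n+d-1}$ of the Casorati matrix defining $w_n$. This cycle of length~$d$ is a product of $d-1$ transpositions and hence contributes the sign $(-1)^{d-1}$, so the determinant equals $(-1)^{d-1}w_n$. Collecting the two signs gives $w_{n+1}=(-1)^{d-1}\big(-a_0\big)w_n=(-1)^d a_0 w_n$, which is the claim.

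I expect no real obstacle here; the only delicate points are bookkeeping ones. One must evaluate the recurrence at the base point~$n$ rather than $n+1$, so that the row produced is exactly $R_n$, the unique row absent from $w_{n+1}$; and one must track the parity of the cyclic permutation, since an off-by-one there would corrupt the sign $(-1)^d$. It is also worth remarking that the hypothesis $a_0\neq0$ matches the nonvanishing of $w$ implied by the linear independence of the $f^{(i)}$, although that fact is not actually needed to establish the identity itself.
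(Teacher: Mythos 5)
Your proof is correct and follows exactly the paper's argument: the paper likewise expands $w_{n+1}$ as the Casorati determinant with rows shifted by $n+1,\dots,n+d$, uses the identities $f^{(i)}_{n+d}=-\sum_{j=0}^{d-1}a_jf^{(i)}_{n+j}$ and row operations to replace the bottom row by $-a_0f^{(i)}_n$, and then extracts the sign $(-1)^d$ from factoring out $-a_0$ together with the cyclic row permutation. Your explicit bookkeeping (evaluating the recurrence at the base point $n$ and counting the $d-1$ transpositions) merely spells out what the paper compresses into ``some row operations.''
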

\begin{proof}
The proof is done by an elementary calculation
\[
  w_{n+1} = \det\begin{pmatrix}
    f^{(1)}_{n+1} & \cdots & f^{(d)}_{n+1}\\
    \vdots & & \vdots\\
    f^{(1)}_{n+d} & \cdots & f^{(d)}_{n+d} \end{pmatrix} =
  \det\begin{pmatrix}
    f^{(1)}_{n+1} & \cdots & f^{(d)}_{n+1}\\
    \vdots & & \vdots\\
    f^{(1)}_{n+d-1} & \cdots & f^{(d)}_{n+d-1}\\
    -a_0f^{(1)}_{n} & \cdots & -a_0f^{(d)}_{n} \end{pmatrix} = (-1)^d a_0 w_n
\]
where in the second step the identities 
$f^{(i)}_{n+d}=-\sum_{j=0}^{d-1}a_jf^{(i)}_{n+j}$
and some row operations have been employed.
\end{proof}

\begin{theorem}
\lbl{thm.factor}
Let $P,Q,R\in\BW$ such that $P=QR$ is a factorization of~$P$, and let
$k$ denote the order of~$R$, i.e., $k=\deg_L(R)$. Then $\bigwedge^{\!k}\!P$
has a linear right factor of the form $L-a$ for some $a\in\BK(q,M)$.
\end{theorem}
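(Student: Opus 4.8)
The plan is to exhibit an explicit cyclic vector for $\bigwedge^{\!k}\!P$ that is annihilated by a first-order operator, and then invoke Lemma~\ref{lem.1} to conclude that $\bigwedge^{\!k}\!P$ has the desired linear right factor. The key observation is that the factorization $P=QR$ with $\deg_L(R)=k$ produces a distinguished $k$-dimensional space of solutions of $Pf=0$, namely the solutions of $Rf=0$ itself: since $P=QR$, any $f$ with $Rf=0$ automatically satisfies $Pf=0$. So first I would fix a fundamental system $f^{(1)},\dots,f^{(k)}$ of solutions of the equation $Rf=0$. These are $k$ linearly independent solutions of $Pf=0$, and their Casoratian $w=W\big(f^{(1)},\dots,f^{(k)}\big)$ is, by Lemma~\ref{lem.1}, annihilated precisely by $\bigwedge^{\!k}\!P$ as its minimal-order operator.

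The heart of the argument is then to show that this particular $w$ satisfies a \emph{first-order} recurrence, i.e.\ that $\bigwedge^{\!k}\!P$ itself is first order, or at least that $w_{n+1}-a w_n=0$ for a suitable $a\in\BK(q,M)$. This is exactly the content of Lemma~\ref{lem.2} applied to the operator~$R$: after normalizing $R$ to be monic of the form $R=L^k+\sum_{j=0}^{k-1}b_jL^j$ with $b_0\neq0$ (which we may do by dividing by the leading coefficient, an element of $\BK(q,M)$, since we work in the localized Weyl algebra~$\BW$), Lemma~\ref{lem.2} gives directly that $w_{n+1}-(-1)^k b_0\,w_n=0$, where $b_0$ is the constant coefficient of the normalized~$R$. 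Thus $w$ is annihilated by the first-order operator $L-(-1)^k b_0$ with $(-1)^kb_0\in\BK(q,M)$.

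Finally I would combine the two facts. On the one hand, $\bigwedge^{\!k}\!P$ is the minimal-order annihilator of $w$ (Lemma~\ref{lem.1}); on the other hand, $w$ is annihilated by the first-order operator $L-(-1)^kb_0$. Since $\BW$ admits a right division algorithm, the minimal-order annihilator $\bigwedge^{\!k}\!P$ must right-divide any operator killing~$w$, and in particular $L-(-1)^kb_0$ is a right multiple of $\bigwedge^{\!k}\!P$; but as $L-(-1)^kb_0$ has order one and $w\neq0$ is a genuine nonzero solution, the minimal annihilator has order exactly one and equals $L-(-1)^kb_0$ up to a left factor in $\BK(q,M)$. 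Either way $\bigwedge^{\!k}\!P$ has the linear right factor $L-a$ with $a=(-1)^kb_0\in\BK(q,M)$, which is what we wanted.

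The main obstacle I anticipate is purely a matter of bookkeeping rather than a genuine difficulty: one must make sure the normalization of~$R$ and the resulting constant $b_0$ really do land in $\BK(q,M)$ and that $w$ is genuinely nonzero (so that the minimal operator is not the zero operator and has order exactly one). Nonvanishing of~$w$ follows from the linear independence of $f^{(1)},\dots,f^{(k)}$, which is why choosing a fundamental system of $Rf=0$ at the outset is the crucial first move; and the hypothesis $b_0\neq0$, equivalently $a_0(R)\neq0$ after normalization, is what guarantees $R$ genuinely has order~$k$ in the sense required by Lemma~\ref{lem.2}.
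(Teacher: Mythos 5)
Your overall skeleton is the paper's: take a fundamental solution set $f^{(1)},\dots,f^{(k)}$ of $Rf=0$, observe that these are $k$ linearly independent solutions of $Pf=0$, and apply Lemma~\ref{lem.2} to the monic normalization of $R$ to get $w_{n+1}=(-1)^k b_0 w_n$ for the Casoratian $w$. But your final step contains a genuine error: you invoke the minimality clause of Lemma~\ref{lem.1} for this particular $w$ and conclude that the minimal annihilator of $w$ --- which you identify with $\bigwedge^{\!k}\!P$ --- has order exactly one, i.e.\ that $\bigwedge^{\!k}\!P$ equals $L-a$ up to a left unit. That is false in general: $\bigwedge^{\!k}\!P$ has order up to $\binom{d}{k}$ (in the paper's own application, $\bigwedge^{\!2}\!P_{7_4}$ has order $10$), and the whole point of the theorem is that it merely \emph{has a right factor} $L-a$, not that it \emph{is} $L-a$. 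The source of the error is that the minimality assertion of Lemma~\ref{lem.1} cannot be applied to the Wronskian of a \emph{special} solution set: when the $f^{(i)}$ span the solution space of a right factor $R$, their Casoratian satisfies a lower-order equation, and the only content of Lemma~\ref{lem.1} you may legitimately use is that $\bigwedge^{\!k}\!P$ \emph{annihilates} $w$ --- which is exactly what the paper's proof uses (``$w$ is contained in the solution space of $\bigwedge^{\!k}\!P$''). Already the case $k=1$ with reducible $P=(L-b)(L-a)$ exposes the problem: $\bigwedge^{\!1}\!P=P$ has order $2$, while the special solution of $L-a$ has a minimal annihilator of order $1$.

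Note also that your divisibility runs in the wrong direction for what is needed: minimality of $\bigwedge^{\!k}\!P$ (even if it held) would give that $\bigwedge^{\!k}\!P$ right-divides $L-a$, whereas the theorem requires that $L-a$ right-divides $\bigwedge^{\!k}\!P$; you bridge this gap only through the false order-one conclusion. The repair is short and is what the paper's proof amounts to: since $w\neq0$ (Casoratian of a fundamental system of $R$) and $w_{n+1}=aw_n$ with $a=(-1)^k b_0\in\BK(q,M)$ nonzero, right division in $\BW$ gives $\bigwedge^{\!k}\!P=Q(L-a)+r$ with $r\in\BK(q,M)$; then $rw=0$, and since $w_n\neq0$ for all sufficiently large $n$, the rational function $r$ must vanish identically, so $L-a$ is a right factor of $\bigwedge^{\!k}\!P$ as claimed.
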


\begin{proof}
Let $F=\big\{f^{(1)},\dots,f^{(k)}\big\}$ be a fundamental solution
set of~$R$.  By Lemma~\ref{lem.2} it follows that
$w=W(f^{(1)},\dots,f^{(k)})$ satisfies a recurrence of order~$1$, say
$w_{n+1}=aw_n,a\in\BK(q,M)$.  But $F$ is also a set of linearly
independent solutions of~$Pf=0$, and therefore $w$ is contained in the
solution space of $\bigwedge^{\!k}\!P$. It follows that
$\bigwedge^{\!k}\!P$ has the right factor~$L-a$.
\end{proof}


\section{Plethysm}
\lbl{sec.plethysm}

In this section we define Adams operations on the ring $\BQ(M)[L]$, and in
particular on the set of affine curves in $\BC^* \times \BC^*$.

Let $\BQ(M)_+[L]$ denote the subring of $\BQ(M)[L]$ which consists of 
$p(M,L) \in \BQ(M)[L]$ with constant term~$1$, i.e., $p(M,0)=1$.
If $p(M,L) \in \BQ(M)_+[L]$ has degree $d=\deg_L(p)$, then we can write
$$
p(M,L)=\prod_{i=1}^d(1+ L_i(M) L^i)
$$
in an appropriate algebraic closure of $\BQ(M)[L]$. 
\begin{definition}
\lbl{def.psiA}
For $k \in \BN$ we define
$
\psi: \BQ(M)_+[L] \longto \BQ(M)_+[L]
$ 
by
$$
\psi_k(p)(M,L)=\prod_{1 \leq i_i < i_2 < \dots < i_k \leq d}
(1+ L_{i_1}(M) \dots L_{i_k}(M) L)
$$
\end{definition}

The next lemma expresses the coefficients of $\psi_k(p)$ in terms of those of 
$p$ using the plethysm operations on the basis $e_i$ of the ring of symmetric 
functions~$\Lambda$. For a definition of the latter, see 
\cite[Sec.I.8]{Macdonald}.

\begin{lemma}
\lbl{lem.plethysm}
If $p=\prod_{i=1}^\infty(1+x_i L)=\sum_{i=0}^\infty e_i L^i$ then 
$$
\psi_k(p)=\sum_{i=0}^\infty (e_i \circ e_k) L^i
$$
\end{lemma}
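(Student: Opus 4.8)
The plan is to recognize $\psi_k$ as a classical plethystic operation on symmetric functions and to reduce the statement to the definition of plethysm on the elementary symmetric functions. First I would work in the ring $\Lambda$ of symmetric functions in the infinitely many variables $x_1,x_2,\dots$, and I would use the standard generating-function identity $\sum_{i=0}^\infty e_i L^i = \prod_{i=1}^\infty(1+x_i L)$ that is already assumed in the statement. The key observation is that Definition~\ref{def.psiA}, applied in the universal setting $L_i(M)=x_i$, produces exactly the product over all $k$-element subsets of the variables: indeed the factorization $p=\prod_i(1+x_i L)$ identifies the ``roots'' $L_i$ with the variables $x_i$, and then
\[
  \psi_k(p)=\prod_{1\le i_1<\dots<i_k}(1+ x_{i_1}\cdots x_{i_k}\,L).
\]

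Next I would invoke the definition of plethysm on the basis $e_k$. Recall that $e_k=\sum_{i_1<\dots<i_k} x_{i_1}\cdots x_{i_k}$ is the sum of the squarefree degree-$k$ monomials, so its monomials $x_{i_1}\cdots x_{i_k}$ are precisely the ``new variables'' $y_J$ (indexed by $k$-subsets $J$) in which the plethysm $f\circ e_k$ evaluates any symmetric function $f$. By the defining property of plethysm on the generating series of the $e_i$, substituting these monomials for the variables gives
\[
  \sum_{i=0}^\infty (e_i\circ e_k)\,L^i
  \;=\;\prod_{J}\bigl(1+ y_J\,L\bigr)
  \;=\;\prod_{1\le i_1<\dots<i_k}\bigl(1+ x_{i_1}\cdots x_{i_k}\,L\bigr),
\]
where the first equality is just the statement that plethysm by $e_k$ is the algebra endomorphism of $\Lambda$ sending each power sum $p_r$ to $p_r\circ e_k=p_r[e_k]$, equivalently sending the formal variables to the monomials of $e_k$. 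Comparing this with the expression for $\psi_k(p)$ above yields the claimed identity coefficient by coefficient in $L$.

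The only real obstacle is bookkeeping about which definition of plethysm is in force: I would have to make sure that the endomorphism ``$-\circ e_k$'' is the one that substitutes the monomials of $e_k$ for the alphabet, i.e.\ that $f\circ e_k$ is the result of writing $f$ in the $y_J$. Once that convention from \cite[Sec.I.8]{Macdonald} is fixed, the argument is purely formal: both sides are symmetric functions in the $x_i$, and the product form of each shows they agree. Finally, to pass from the universal identity in $\Lambda$ back to $\BQ(M)_+[L]$ I would apply the specialization $x_i\mapsto L_i(M)$ (valid in the algebraic closure of $\BQ(M)[L]$ where the factorization lives), under which $e_i\mapsto$ the $i$-th coefficient of $p$, recovering Lemma~\ref{lem.plethysm} exactly as stated.
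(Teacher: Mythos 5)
Your proof is correct and matches the paper's intent exactly: the paper states Lemma~\ref{lem.plethysm} \emph{without proof}, treating it as immediate from Macdonald's definition of plethysm, and your write-up is precisely that definitional unwinding --- identify the roots with a universal alphabet $x_i$, observe $\psi_k(p)=\prod_{i_1<\cdots<i_k}(1+x_{i_1}\cdots x_{i_k}L)$, and recognize this product as the generating function of the elementary symmetric functions in the monomials of $e_k$, which is by definition $\sum_{i}(e_i\circ e_k)L^i$, then specialize $x_i\mapsto L_i(M)$. One point in your favor: you silently (and correctly) read the factorization in Definition~\ref{def.psiA} as $p=\prod_{i=1}^d\bigl(1+L_i(M)L\bigr)$, which is what the degree count forces; the exponent in the paper's displayed formula $\prod_{i=1}^d\bigl(1+L_i(M)L^i\bigr)$ is a typo.
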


\begin{corollary}
\lbl{cor.P2P3.plethysm}
In particular for $d=5$ and $k=2,3$ (as is the case of interest for the 
knot~$7_4$) the {\tt SF} package \cite{Stembridge} gives:
{\tiny
\begin{align*}
p &= 1 + e_1 L + e_2 L^2 + e_3 L^3 + e_4 L^4 + e_5 L^5
\\
\psi_2(p) &=1 
+e_2 L 
+( e_1e_3-e_4)L^2 
+(-2e_2e_4+e_3^2+e_1^2e_4-e_1e_5 )L^3 
+(e_1^3e_5+e_3e_5-e_4^2-3e_1e_2e_5+e_1e_3e_4 )L^4 \\ &
+(e_1^2e_3e_5-2e_1e_4e_5-2e_2e_3e_5+2e_5^2+e_2e_4^2 )L^5 
+(e_1e_2e_4e_5-e_1^2e_5^2+e_2e_5^2-3e_3e_4e_5+e_4^3 )L^6 \\ &
+(-e_4e_5^2+e_1e_4^2e_5-2e_1e_3e_5^2+e_2^2e_5^2 )L^7
+(e_2e_4e_5^2-e_1e_5^3 )L^8
+e_3e_5^3 L^9
+e_5^4 L^{10}
\\
\psi_3(p) &=
1 
+e_3 L 
+(e_2e_4-e_1e_5)L^2 
+(-2e_1e_3e_5-e_4e_5+e_1e_4^2+e_2^2e_5 )L^3 
+(e_1e_2e_4e_5-e_1^2e_5^2+e_2e_5^2-3e_3e_4e_5+e_4^3 )L^4 \\ &
+(-2e_2e_3e_5^2+2e_5^3+e_1^2e_3e_5^2+e_2e_4^2e_5-2e_1e_4e_5^2 )L^5 
+(-e_4^2e_5^2+e_3e_5^3+e_1^3e_5^3+e_1e_3e_4e_5^2-3e_1e_2e_5^3 )L^6 \\ &
+(e_3^2e_5^3-e_1e_5^4+e_1^2e_4e_5^3-2e_2e_4e_5^3 )L^7
+(-e_4e_5^4+e_1e_3e_5^4 )L^8
+e_2e_5^5 L^9
+e_5^6 L^{10}
\end{align*}
}
\end{corollary}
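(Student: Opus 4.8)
The plan is to obtain the corollary as the $d=5$ specialization of Lemma~\ref{lem.plethysm}, which already identifies, in the ring of symmetric functions~$\Lambda$, the coefficient of $L^i$ in $\psi_k(p)$ with the plethysm $e_i\circ e_k$. For the knot $7_4$ we have $d=5$, so the factorization underlying Definition~\ref{def.psiA} involves only five roots, which I denote $x_1,\dots,x_5$ (the $L_i(M)$ of that definition). Concretely I would read $\psi_k(p)=\prod_{1\le i_1<\dots<i_k\le 5}(1+x_{i_1}\cdots x_{i_k}L)$, whose $L^i$-coefficient is the $i$-th elementary symmetric function of the $\binom{5}{k}$ products $x_{i_1}\cdots x_{i_k}$, i.e.\ precisely $e_i\circ e_k$ in five variables. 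Since $\binom{5}{2}=\binom{5}{3}=10$, both $\psi_2(p)$ and $\psi_3(p)$ have $L$-degree $10$, which accounts for the length of the tables; it remains to rewrite each coefficient as a polynomial in $e_1,\dots,e_5$.

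To produce these polynomials systematically I would compute in the power-sum basis, where plethysm is transparent: it is a ring homomorphism in its first argument, while $p_r\circ g$ is obtained from a power-sum expansion $g=\sum_\lambda c_\lambda p_\lambda$ simply by scaling every part, $p_r\circ g=\sum_\lambda c_\lambda p_{r\lambda}$. Thus I would expand $e_k=\sum_\lambda c_\lambda p_\lambda$ and each $e_i=\sum_\mu d_\mu p_\mu$ via the standard formula $e_m=\sum_{\lambda\vdash m}\varepsilon_\lambda z_\lambda^{-1}p_\lambda$, assemble $e_i\circ e_k=\sum_\mu d_\mu\prod_j(p_{\mu_j}\circ e_k)$ by first-argument multiplicativity, and convert back to the $e$-basis through Newton's identities. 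This is exactly the bookkeeping carried out by the \texttt{SF} package.

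The step that genuinely demands care, and that I expect to be the main obstacle, is the passage from $\Lambda$ to five variables: the symmetric function $e_i\circ e_k$ computed in $\Lambda$ is not yet the tabulated coefficient, and one must reduce it modulo the ideal generated by $e_6,e_7,\dots$, since $e_j=0$ for $j\ge 6$ in five variables. This reduction is far from cosmetic. For example, an honest computation gives, in~$\Lambda$,
\[
  e_2\circ e_3 = e_2e_4-e_1e_5+e_6,
\]
so only after discarding the $e_6$ term does one recover the entry $e_2e_4-e_1e_5$ listed for the $L^2$-coefficient of $\psi_3(p)$. Because the top coefficients have degree up to $3\cdot 10=30$ and involve a large number of partitions, performing and truncating these plethysms by hand is impractical, which is precisely why the computer algebra step is used; the reduction just exhibited is what certifies that the \texttt{SF} output equals $\psi_k(p)$. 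As independent confirmation one checks the extreme coefficients directly: that of $L^0$ is $e_0\circ e_k=1$, that of $L^1$ is $e_1\circ e_k=e_k$ (giving $e_2$ and $e_3$), and that of $L^{10}$ is the product of all $\binom{5}{k}$ monomials, in which each variable occurs $\binom{4}{k-1}$ times, hence equals $e_5^{\binom{4}{k-1}}$, namely $e_5^4$ and $e_5^6$, in agreement with the tables.
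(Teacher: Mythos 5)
Your proposal is correct and takes essentially the same route as the paper: the corollary is exactly Lemma~\ref{lem.plethysm} specialized to $d=5$ and certified by the \texttt{SF} computation, and your explicit reduction modulo the ideal $(e_6,e_7,\dots)$ (e.g.\ $e_2\circ e_3=e_2e_4-e_1e_5+e_6$ in $\Lambda$, truncating to $e_2e_4-e_1e_5$ in five variables) is precisely the step the paper leaves implicit in passing from $\Lambda$ to the tabulated coefficients. Your consistency checks on the extreme coefficients ($1$, $e_k$, and $e_5^{\binom{4}{k-1}}$ for $L^0$, $L^1$, $L^{10}$) agree with the displayed tables.
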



\section{Factorization of $q$-difference operators after
Bronstein-Petkov\v{s}ek}
\lbl{sec.thm.Pfactor}

This section is not needed for the results of our paper, but may be
of independent interest. In \cite{BronsteinPetkovsek96}, 
Bronstein-Petkov\v{s}ek developed a factorization algorithm for 
$q$-difference operators, and more generally, for Ore operators. A key
component of their algorithm, which predated and motivated 
the work of \cite{APP}, is
to reduce the problem of factorization into computing all
linear right factors of a finite list of so-called associated operators.  
Since this factorization algorithm is not widely known, we will describe it 
in this section, following \cite{BronsteinPetkovsek96}. All results in
this section are due to \cite{BronsteinPetkovsek96}.

\begin{definition}
\lbl{def.wl}
Let $P\in\BW$ be of the form $P=L^d+\sum_{j=0}^{d-1}a_jL^j$ with
$a_0\neq0$, and let $\big\{f^{(1)}_n,\dots,f^{(d)}_n\big\}$ be a
fundamental solution set of the equation $Pf=0$. Let
\begin{equation}
\lbl{eq.Lw}
\sum_{l=0}^d w^{(d-l)} L^l f = \det \begin{pmatrix}
f & f^{(1)} & \cdots & f^{(d)} \\
\vdots & \vdots & & \vdots \\
L^d f & L^d f^{(1)} & \cdots & L^d f^{(d)}.
\end{pmatrix}
\end{equation}
\end{definition}

\begin{lemma}
\lbl{lem.3}
With the notation of Definition \ref{def.wl} we have
\renewcommand{\labelenumi}{(\alph{enumi})}
\begin{enumerate}
\item $w^{(d-j)}/w^{(0)}=a_j$ for $j=0,\dots,d$.
\item $w^{(0)}=W(f^{(1)},\dots,f^{(d)})$ satisfies $w^{(0)}_{n+1}+(-1)^d a_0 w^{(0)}_n=0$.
\item For $j=0,\dots,d-1$ and $n \in \BN$ we have
\begin{equation}
\lbl{eq.waj}
a_j(q,q^n) w^{(d-j)}_{n+1} + (-1)^d a_j(q,q^{n+1}) a_0(q,q^n)
w^{(d-j)}_{n} =0.
\end{equation}
\end{enumerate}
\end{lemma}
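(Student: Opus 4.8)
The three parts are tightly linked, so the plan is to establish (a) first, then identify the leading coefficient $w^{(0)}$ as a Casoratian to deduce (b) from Lemma~\ref{lem.2}, and finally obtain (c) as a purely algebraic consequence of (a) and (b). Throughout I write $P_w:=\sum_{l=0}^{d}w^{(d-l)}L^{l}$ for the operator whose coefficients are read off from the determinant in Definition~\ref{def.wl}, expanded along its first column, so that the $L^{d}f$ term carries the coefficient $w^{(0)}$.

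For (a) the key observation is that $P_w$ annihilates the entire fundamental system. Indeed, substituting $f=f^{(i)}$ into the determinant makes its first column equal to the $(i+1)$-st column, so the determinant vanishes and $P_wf^{(i)}=0$ for $i=1,\dots,d$. The same holds for $P$. Now $P_w-w^{(0)}P$ has $L$-degree at most $d-1$ yet annihilates the $d$-dimensional space $\Span(f^{(1)},\dots,f^{(d)})$; since a nonzero operator of order $e$ has solution space of dimension at most $e<d$, we get $P_w=w^{(0)}P$ (note $w^{(0)}\neq0$ because, by (b), it is the Casoratian of a fundamental system). Comparing the coefficients of $L^{j}$ yields $w^{(d-j)}=w^{(0)}a_j$ for $j=0,\dots,d$ (with $a_d=1$), which is exactly (a).

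For (b), expanding the determinant of Definition~\ref{def.wl} along the first column shows that $w^{(0)}$, the coefficient of $L^{d}f$, is the cofactor obtained by deleting the last row and the first column; up to the sign produced by the Laplace expansion this cofactor is precisely $W(f^{(1)},\dots,f^{(d)})$ as defined in~\eqref{eq.wronskian}. Hence $w^{(0)}$ agrees with the Casoratian up to a fixed sign, and Lemma~\ref{lem.2} immediately gives the claimed first-order recurrence for $w^{(0)}$. For (c), I read (a) as an identity of sequences, $w^{(d-j)}_n=a_j(q,q^n)\,w^{(0)}_n$, being careful that passing from the operator identity to sequences turns the single coefficient $a_j$ into the evaluation $a_j(q,q^n)$ and, after the shift $n\mapsto n+1$, into $a_j(q,q^{n+1})$, because $L$ acts on $M=q^n$. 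Shifting this identity, substituting the recurrence of part (b) for $w^{(0)}_{n+1}$, eliminating $w^{(0)}_n$ via $w^{(0)}_n=w^{(d-j)}_n/a_j(q,q^n)$, and clearing the denominator $a_j(q,q^n)$ then produces the three-term relation~\eqref{eq.waj}. Equivalently, the left-hand side of~\eqref{eq.waj} is nothing but $a_j(q,q^n)\,a_j(q,q^{n+1})$ times the recurrence of part (b), which makes the implication transparent.

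The argument is entirely elementary linear algebra; the only genuine obstacle is the sign bookkeeping. One must track the factor $(-1)^{d}$ from the cofactor expansion that identifies $w^{(0)}$ with $W$, together with the matching $(-1)^{d}$ arising in Lemma~\ref{lem.2} from cyclically moving the substituted bottom row to the top, and verify that they combine consistently in the final recurrence; I would pin these signs down by first running the computation in the small cases $d=1$ and $d=2$ before committing to the general form. A secondary point requiring care in (c) is the distinction between $a_j(q,q^n)$ and its shift $a_j(q,q^{n+1})$, since conflating the two would destroy the identity.
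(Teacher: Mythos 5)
Your proposal is correct and is essentially the paper's proof. Part (a) is the paper's argument made precise (the paper's terse ``it follows that $P=\sum_{l}w^{(d-l)}L^l$'' is exactly your proportionality step $P_w=w^{(0)}P$, justified by the equal-columns observation and a dimension count), and part (c) is verbatim the paper's elimination: shift $w^{(d-j)}=a_jw^{(0)}$, substitute the first-order relation for $w^{(0)}$, and eliminate $w^{(0)}$. For (b) the paper does not invoke Lemma~\ref{lem.2} directly but combines (a) at $j=0$ with the identification of $w^{(d)}$ as the shifted Casoratian; since the proof of Lemma~\ref{lem.2} is the very same row-reduction computation, your detour through it is a difference of packaging, not of substance.

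One point your promised sign bookkeeping would actually surface: by the cofactor expansion, $w^{(0)}_n=(-1)^d\,W\big(f^{(1)},\dots,f^{(d)}\big)_n$, and a constant factor $(-1)^d$ cannot change a first-order recurrence; hence your route through Lemma~\ref{lem.2} yields $w^{(0)}_{n+1}-(-1)^d a_0\,w^{(0)}_n=0$, with the \emph{opposite} sign to the printed statement (b). The case $d=1$, $P=L+a_0$, where $w^{(0)}_n=-f^{(1)}_n$ and $f^{(1)}_{n+1}=-a_0(q,q^n)f^{(1)}_n$, confirms that the printed ``$+$'' in (b) (and consequently the sign in \eqref{eq.waj}) is a typo inconsistent with Lemma~\ref{lem.2}; the paper's own proof of (b) silently drops the cofactor signs, which is how the discrepancy goes unnoticed there. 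So your plan to pin down the signs on small cases is exactly the right instinct: carried out, it detects this inconsistency rather than reproducing it, and your argument then proves the (sign-corrected) lemma.
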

\begin{proof}
Since $\sum_{l=0}^d w^{(d-l)} L^l f^{(i)}=0$ for $i=1,\dots,d$ and
$\{f^{(1)},\dots, f^{(d)}\}$ is a fundamental solution of the equation $Pf=0$,
it follows that $P=\sum_{l=0}^d w^{(d-l)} L^l f$. This proves (a).

The definition of $w^{(0)}$ implies that $w^{(0)}=W(f^{(1)},\dots,f^{(d)})$
and likewise 
$$
w^{(d)}=W(L f^{(1)},\dots,L f^{(d)})=L W(L f^{(1)},\dots,L f^{(d)}).
$$ 
Using $w^{(d)}=a_0 w^{(0)}$ (by part (a)) and the above, we obtain (b).
  
Now, (a) gives $w^{(d-j)}=a_j w^{(0)}$, hence 
$L w^{(d-j)}= (L a_j) L w^{(0)}= (-1)^{d-1} (L a_j) a_0 w^{(0)}$. Eliminating  
$w^{(0)}$, (c) follows.
\end{proof}

Lemma~\ref{lem.3} gives the following algorithm that produce a finite
set of all possible right factors $R=L^k +\sum_{j=0}^{k-1} a_j L^j$ of an 
element $P \in \BW$.

\begin{corollary}
\lbl{cor.alg.lem2}
Using the definition of $w^{(k-j)}$ for $j=0,\dots,k$ together with the 
equations $Pf^{(i)}=0$ for $i=1,\dots,k$ allows to express $w^{(k-j)}_{n+\ell}$ for
arbitrary $\ell\in\BN$ as a $\BK(q,q^n)$-linear combination of the
products $\prod_{i=1}^k f^{(i)}_{n+j_i}$ where $0\leq j_i<\deg_L(P)$
for $1\leq i\leq k$. This allows to determine the minimal~$\ell$ such
that $w^{(k-j)}_{n},\dots,w^{(k-j)}_{n+\ell}$ are $\BK(q,q^n)$-linearly dependent.
Let $\bigwedge^{\!k}_j\!P$ denote the corresponding monic minimal-order
operators.
List all right factors of $\bigwedge^{\!k}_j\!P$ using qHyper.
If $a_j=0$, include it in the list of possible values of~$a_j$. Else,
use the computed finite list and equation \eqref{eq.waj} to list all
possible values of~$a_j$. The result follows.
\end{corollary}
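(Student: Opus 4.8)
The plan is to check that the procedure is correct and terminating: for a fixed order~$k$ it should output a finite list of rational functions containing the coefficient~$a_j$ of \emph{every} monic right factor $R=L^k+\sum_{j=0}^{k-1}a_jL^j$ of~$P$, so that letting $k$ range over $1,\dots,\deg_L(P)-1$ yields a finite superset of all right factors. So suppose $P=QR$ with $R$ as above, and fix a fundamental solution set $\{f^{(1)},\dots,f^{(k)}\}$ of $Rf=0$. Since every solution of $Rf=0$ is a solution of $Pf=0$, these are $k$ linearly independent solutions of $Pf=0$, and I may apply Definition~\ref{def.wl} and Lemma~\ref{lem.3} with $d$ replaced by~$k$ and $P$ by~$R$.

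First I would justify that each $w^{(k-j)}$ satisfies a minimal-order recurrence depending only on~$P$, not on the unknown~$R$. Expanding the determinant defining $w^{(k-j)}$ and repeatedly using $Pf^{(i)}=0$ to rewrite every shift $f^{(i)}_{n+m}$ with $m\ge\deg_L(P)$ in terms of $f^{(i)}_n,\dots,f^{(i)}_{n+\deg_L(P)-1}$, one expresses each $w^{(k-j)}_{n+\ell}$ as a $\BK(q,q^n)$-linear combination of the finitely many products $\prod_{i=1}^k f^{(i)}_{n+j_i}$ with $0\le j_i<\deg_L(P)$. The coefficients of this expansion are universal, i.e.\ they depend only on $P,k,j,\ell$, so the minimal $\ell$ for which $w^{(k-j)}_n,\dots,w^{(k-j)}_{n+\ell}$ are dependent, and the resulting monic operator $\bigwedge^{\!k}_j\!P$, are intrinsic to~$P$; in particular $\bigwedge^{\!k}_j\!P$ annihilates the actual $w^{(k-j)}$ coming from~$R$. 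This is the same bookkeeping as in Corollary~\ref{cor.alg.lem1}.

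Next I would extract hypergeometric structure. By Lemma~\ref{lem.3}(a) we have $w^{(k-j)}=a_jw^{(0)}$, and by part~(b) the Casoratian $w^{(0)}$ satisfies a first-order recurrence, hence is a hypergeometric term; since $a_j\in\BK(q,M)=\BK(q,q^n)$ is rational, $w^{(k-j)}$ is hypergeometric too. Thus, whenever $a_j\neq0$, the sequence $w^{(k-j)}$ is a nonzero hypergeometric solution of $\bigwedge^{\!k}_j\!P$, equivalently this operator has a linear right factor $L-\rho$ with $\rho=w^{(k-j)}_{n+1}/w^{(k-j)}_n$. Applying qHyper to $\bigwedge^{\!k}_j\!P$ returns, by the completeness of the Abramov--Paule--Petkov\v{s}ek algorithm, a finite list of all such linear right factors, so the correct~$\rho$ occurs among them; when $a_j=0$ I simply record the value~$0$.

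Finally I would recover $a_j$ from $\rho$ by inverting equation~\eqref{eq.waj}. Dividing \eqref{eq.waj} by $w^{(k-j)}_n$ gives $a_j\rho+(-1)^k(La_j)a_0=0$. For $j=0$ this collapses to $\rho=(-1)^{k+1}(La_0)$, so each candidate $\rho$ determines $a_0$ outright; substituting this $a_0$ into the $j\ge1$ relations turns each into the first-order equation $La_j/a_j=(-1)^{k+1}\rho/a_0$, whose rational solutions, when they exist, are unique up to a multiplicative constant in $\BK(q)$. I expect the main obstacle to be exactly this last inversion: equation~\eqref{eq.waj} recovers $a_j$ only up to that residual constant (qHyper returns the ratio~$\rho$, not the overall scale of the term), so finiteness of the candidate set is achieved only after fixing the constants and discarding spurious tuples by testing right-divisibility through the division algorithm in~$\BW$. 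The whole argument rests on qHyper producing a \emph{complete} finite list of linear right factors, which is the nontrivial input borrowed from \cite{BronsteinPetkovsek96} and \cite{APP}.
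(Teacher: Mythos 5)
Your proof is correct and takes essentially the same route the paper intends for this corollary: the universal reduction of $w^{(k-j)}_{n+\ell}$ via $Pf^{(i)}=0$ exactly as in Corollary~\ref{cor.alg.lem1}, Lemma~\ref{lem.3}(a),(b) to see that each nonzero $w^{(k-j)}=a_jw^{(0)}$ is $q$-hypergeometric so that qHyper applied to $\bigwedge^{\!k}_j\!P$ is guaranteed (by completeness of the Abramov--Paule--Petkov\v{s}ek algorithm) to capture its ratio among the linear right factors, and inversion of \eqref{eq.waj} to recover $a_0$ exactly from the $j=0$ case and each $a_j$ with $j\geq1$ as a rational solution of a first-order equation. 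Your closing remark is a refinement rather than a deviation: the paper's instruction to ``list all possible values of~$a_j$'' silently elides the multiplicative constant in $\BK(q)$ that \eqref{eq.waj} leaves undetermined for $j\geq 1$, and you are right that, as in Bronstein--Petkov\v{s}ek, these constants must be fixed at the end (e.g., by a right-division test in~$\BW$) before one truly has a finite list of candidate right factors.
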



\section{Irreducibility of the computed recurrence for $7_4$}
\lbl{sec.irr74}

The irreducibility of a monic operator $P\in\BW$ of order~$d$ can be
established by Theorem~\ref{thm.factor}, i.e., by showing that none of the
exterior powers $\bigwedge^{\!k}\!P$ for $1\leq k<d$ has a linear right
factor. In the case of the fifth-order operator~$P_{7_4}$ we observed that its
$q=1$ specialization factors into two irreducible factors of $L$-degrees $2$
and~$3$, respectively (and hence Proposition~\ref{prop.easy} is not
applicable). We conclude that $P_{7_4}$ cannot have right factors of
$L$-degrees $1$ or~$4$. Thus it suffices to inspect its second and third
exterior powers only.

The computation of an exterior power $\bigwedge^{\!k}\!P$ is immediate from
its definition.  We start with an ansatz for a linear recurrence for the
Wronskian:
\begin{equation}\lbl{eq.ansatz}
  c_\ell(q,M)w_{n+\ell} + \dots + c_1(q,M)w_{n+1} + c_0(q,M)w_n = 0.
\end{equation}
In the next step, all occurrences of $w_{n+j}$ in~\eqref{eq.ansatz} are
replaced by the expansion of the determinant~\eqref{eq.wronskian}, e.g.,
for $k=2$ we have
\[
  w_{n+j} = f^{(1)}_{n+j} f^{(2)}_{n+j+1} - f^{(1)}_{n+j+1} f^{(2)}_{n+j}.
\]
As before let $d$ denote the $L$-degree of~$P$.  Now each $f^{(i)}_{n+j}$ with
$j\geq d$ is rewritten as a $\BQ(q,M)$-linear combination of
$f^{(i)}_n,\dots,f^{(i)}_{n+d-1}$, using the equation $Pf^{(i)}=0$. Finally,
coefficient comparison with respect to $f^{(i)}_{n+j},1\leq i\leq k,0\leq j<d$
yields a linear system for the unknown coefficients $c_0,\dots,c_\ell$. The
minimal-order recurrence for $w_n$ can be found by trying $\ell=0$, $\ell=1$,
\dots, until a solution is found. This methodology was employed to compute
$\bigwedge^{\!2}\!P_{7_4}$ and $\bigwedge^{\!3}\!P_{7_4}$ (see
Table~\ref{tab.extpows} for their sizes). 
\begin{table}
\begin{center}
\begin{tabular}{|l|p{1em}rp{1em}|p{1em}rp{1em}|p{1em}rp{1em}|p{0.5em}rp{0.5em}|}
\hline\rule{0pt}{1.1em}
& \multicolumn{3}{|c|}{$L$-degree}
& \multicolumn{3}{|c|}{$M$-degree}
& \multicolumn{3}{|c|}{$q$-degree}
& \multicolumn{3}{|c|}{\texttt{ByteCount}} \\ \hline\hline
\rule{0pt}{1.1em}$P_{7_4}$ &&
   5 &&&  24 &&&   65 &&&   463544 &\\ \hline
\rule{0pt}{1.1em}$\bigwedge^{\!2}\!P_{7_4}$ &&
  10 &&& 134 &&&  749 &&& 37293800 &\\ \hline
\rule{0pt}{1.1em}$\bigwedge^{\!3}\!P_{7_4}$ &&
  10 &&& 183 &&& 1108 &&& 62150408 &\\ \hline
\end{tabular}
\caption{Some statistics concerning $P_{7_4}$ and its exterior powers}
\lbl{tab.extpows}
\end{center}
\end{table}

Having the exterior powers of $P_{7_4}$ at hand, we can now apply
Theorem~\ref{thm.factor} to it: for establishing the irreducibility of
$P_{7_4}$ we have to show that its exterior powers do not have right factors
of order one. Note that for our application we would not necessarily need the
minimal-order recurrences for the Wronskian---as long as they have no linear
right factors, the irreducibility follows as a consequence. Note also that one
could try to use Proposition~\ref{prop.easy} for this task; unfortunately this
is not going to work, since from the discussion in Section~\ref{sec.plethysm}
it is clear that, after the substitution $q=1$, the exterior powers in
question do have a linear factor.

It is well known that a linear right factor of a $q$-difference equation
corresponds to a $q$-hypergeometric solution, i.e., a solution $f_n(q)$ such
that $f_{n+1}/f_n$ is a rational function in~$q$ and~$q^n$. The problem of
computing all such solutions has been solved in~\cite{APP} and the
corresponding algorithm has been implemented by Petkov\v{s}ek in the
Mathematica package \texttt{qHyper}.

Let $P(q,M,L)=\sum_{i=0}^d p_i(q,M)L^i$ be an operator such that all $p_i$ are
polynomials in $q$ and~$M$.  The qHyper algorithm described in~\cite{APP} attempts to find a
right factor $L-r(q,M)$ of~$P$ where the rational function~$r$ is assumed to
be written in the normal form
\[
  r(q,M) = z(q)\frac{a(q,M)}{b(q,M)}\frac{c(q,qM)}{c(q,M)}
\]
subject to the conditions
\begin{equation}\lbl{eq.gcd}
  \gcd(a(q,M),b(q,q^nM))=1 \text{ for all } n\in\BN,
\end{equation}
and
\[
  \gcd(a(q,M),c(q,M))=1,\quad
  \gcd(b(q,M),c(q,qM))=1,\quad
  c(0)\neq0
\]
(see \cite{APP} for the existence proof).  It is not difficult to show that
under these assumptions $a(q,M)\mid p_0(q,M)$ and $b(q,M)\mid
p_d(q,q^{1-d}M)$.  Therefore the algorithm qHyper proceeds by testing all
admissible choices of $a$ and~$b$. Each such choice yields a $q$-difference
equation for $c(q,M)$ which also involves the unknown algebraic
expression~$z(q)$.  The techniques for solving this kind of equations (or for
showing that no solution exists) are described in detail in~\cite{APP}.

Now let's apply qHyper to $P^{(2)}(q,M,L):=\bigwedge^{\!2}\!P_{7_4}$ whose
trailing and leading coefficients are given by
\begin{align*}
p_0(q,M) & = q^{162}M^{44}(M-1)
  \bigg(\prod_{i=6}^9(q^iM-1)\bigg)
  \bigg(\prod_{i=6}^{10}(q^iM+1)(q^{2i+1}M^2-1)\bigg) F_1(q,M)\\
p_{10}(q,q^{-9}M) & = q^{-397}(q^2M-1)
  \bigg(\prod_{i=4}^7(M-q^i)\bigg)
  \bigg(\prod_{i=4}^8(M+q^i)(M^2-q^{2i+1})\bigg) F_2(q,M)\\
\end{align*}
where $F_1$ and $F_2$ are large irreducible polynomials, related by
$q^{280}F_1(q,M)=F_2(q,q^{10}M)$. A blind application of qHyper would
result in $45\cdot 2^{16}\cdot 2^{16}=193\,273\,528\,320$ possible choices for
$a$ and~$b$---far too many to be tested in reasonable
time. In~\cite{CluzeauHoeij06,Horn08} improvements to qHyper have
been presented which are based on local types and exclude a large number of
possible choices; however, the simple criteria described below seem to be more
efficient.

In order to confine the number of qHyper's test cases we exploit two facts.
The first is the fact that $P^{(2)}(1,M,L)=R_1(M)\cdot (L-M^4)\cdot
Q_1(M,L)\cdot Q_2(M,L)$ where $Q_1$ and $Q_2$ are irreducible of $L$-degree
$3$ and~$6$, respectively. In other words, we need only to test pairs $(a,b)$
which satisfy the condition
\begin{equation}\lbl{eq.q1}
  a(1,M)=M^4b(1,M).
\end{equation}
The second fact is that $a$ and $b$ must fulfill condition~\eqref{eq.gcd}; in
Remark 4.1 of~\cite{Petkovsek92} this improvement is already suggested,
formulated in the setting of difference equations. In our example we are
lucky because the two criteria exclude most of the possible choices for $a$
and~$b$; the process of figuring out which cases remain to be tested is now
presented in detail.
\renewcommand{\labelenumi}{\arabic{enumi}.}
\begin{enumerate}
\item \eqref{eq.q1} implies that either both $F_1$ and $F_2$ must be present
  or none of them; condition~\eqref{eq.gcd} then excludes them entirely.
\item Clearly the factor $M^4$ in~\eqref{eq.q1} can only come from $M^{44}$
  in~$p_0$; thus all other (linear and quadratic) factors in $a(1,M)/b(1,M)$
  must cancel completely.
\item The most simple admissible choice is $a(q,M)=M^4$ and $b(q,M)=1$.
\item Because of~\eqref{eq.gcd} a cancellation can almost never take place
  among factors which are equivalent under the substitution $q=1$. This is
  reflected by the fact that the entries in the first column of
  Table~\ref{tab.factors} are (row-wise) larger than those in the second
  column, e.g., $(q^6M+1)\mid a(q,M)$ and $(q^{-4}M+1)\mid b(q,M)$
  violates~\eqref{eq.gcd}.
\item The only exception is that $(M-1)\mid a(q,M)$ cancels with $(q^2M-1)\mid
  b(q,M)$ in $a(1,M)/b(1,M)$. In that case, \eqref{eq.gcd} excludes further
  factors of the form $q^iM-1$, and together with~\eqref{eq.q1} we see that no
  other factors at all can occur. This gives the choice $a(q,M)=M^4(M-1)$ and
  $b(q,M)=q^2M-1$.
\item We may assume that $a(q,M)$ contains some of the quadratic factors
  $q^iM^2-1$. For $q=1$ they factor as $(M-1)(M+1)$ and therefore can be
  canceled with corresponding pairs of linear factors in
  $b(q,M)$. Condition~\eqref{eq.gcd} forces $a(q,M)$ to be free of linear
  factors and $b(q,M)$ to be free of quadratic factors. Thus we obtain
  $\sum_{m=1}^5\binom{5}{m}^3=2251$ possible choices.
\item Analogously $a(q,M)$ can have some linear factors which for $q=1$ must
  cancel with quadratic factors in $b(q,M)$; this gives $2251$ further
  choices.
\end{enumerate}
Summing up, we have to test $4504$ cases which can be done in relatively short
time on a computer. None of these cases delivered a solution for $c(q,M)$
and $z(q)$ which proves that $P^{(2)}$ does not have a linear right factor.
\begin{table}
\begin{center}
\begin{tabular}{|l|p{7em}|p{7em}||p{8em}|p{8em}|}\hline
\rule{0pt}{1.1em} & \multicolumn{2}{|c||}{$\bigwedge^{\!2}\!P_{7_4}$}
  & \multicolumn{2}{|c|}{$\bigwedge^{\!3}\!P_{7_4}$}\\ \hline
\rule{0pt}{1em} & $p_0(q,M)$ & $p_{10}(q,q^{-9}M)$
  & $p_0(q,M)$ & $p_{10}(q,q^{-9}M)$ \\ \hline\hline
\rule{0pt}{1em} $q^iM-1$ & $0$, $6$, $7$, $8$, $9$ & $-7$, $-6$, $-5$, $-4$, $2$
  & $0$, $7$, $8$, $9$ & $-6$, $-5$, $-4$, $3$ \\ \hline
\rule{0pt}{1em} $q^iM+1$
  & $6$, $7$, $8$, $9$, $10$ & $-8$, $-7$, $-6$, $-5$, $-4$
  & $7$, $8$, $9$, $10$, $11$ & $-8$, $-7$, $-6$, $-5$, $-4$\\ \hline
\rule{0pt}{1em} $q^iM^2-1$
  & $13$, $15$, $17$, $19$, $21$ & $-17$, $-15$, $-13$, $-11$, $-9$
  & $5$, $7$, $9$, $11$, $13^2$, $15^2$, $17^2$, $19^2$, $21^2$, $23$
  & $-17$, $-15^2$, $-13^2$, $-11^2$, $-9^2$, $-7^2$, $-5$, $-3$, $-1$, $1$ \\ \hline
\end{tabular}
\caption{Factors of the leading and trailing coefficients of the exterior
  powers of $P_{7_4}$; each cell contains the values of~$i$ of the
  corresponding factors. Superscripts indicate that factors occur
  with multiplicities.}\lbl{tab.factors}
\end{center}
\end{table}

The situation for $P^{(3)}(q,M,L):=\bigwedge^{\!3}\!P_{7_4}$ is very similar.
Now the trailing and leading coefficients turn out to be
\begin{align*}
p_0(q,M) & = q^{297}M^{66}(M-1)(q^7M-1)\cdots(q^{23}M^2-1)F_3(q,M)\\
p_{10}(q,q^{-9}M) & = q^{-456}(q^3M-1)(M-q^4)\cdots(M^2-q^{17})F_4(q,M)
\end{align*}
where the linear and quadratic factors can be extracted from
Table~\ref{tab.factors}. Also not explicitly displayed are the large
irreducible factors $F_3$ and $F_4$ which satisfy
$q^{275}F_3(q,M)=F_4(q,q^{10}M)$. For $q=1$ we obtain the factorization
$P^{(3)}(1,M,L)=R_2(M)\cdot(L+M^7)\cdot Q_3(M,L)\cdot Q_4(M,L)$ where $Q_3$
and $Q_4$ are irreducible of $L$-degree $3$ and~$6$, respectively.  As before
we get two special cases, the first with $a(q,M)=M^7$ and $b(q,M)=1$, and the
second with $a(q,M)=M^7(M-1)$ and $b(q,M)=q^3M-1$.  For the choices where we
cancel quadratic against linear factors, we obtain
\[
  2\sum_{m=1}^4\binom{4}{m}\binom{5}{m}
  \sum_{j=0}^{\lfloor m/2\rfloor}\binom{5}{j}\binom{10-j}{m-2j}=23600
\]
possibilities. Again, none of these cases yields a solution for $c(q,M)$
and therefore we have shown that $P^{(3)}$ does not have a linear right
factor.

Theorem~\ref{thm.factor} now implies that $P_{7_4}$ cannot have a right factor
of order $2$ or~$3$. We conclude that the operator $P_{7_4}$ is irreducible.


\section{No recurrence of order zero}
\lbl{sec.finish}

In this section we give an elementary criterion to deduce that a
$q$-holonomic sequence does not satisfy an inhomogeneous
recurrence of order zero, and apply it in the case of the $7_4$ knot to
conclude the proof of Theorem~\ref{thm.cj22}. The next lemma is
obvious.

\begin{lemma}
\lbl{lem.noCF}
If $\deg_q(f_n(q))$ is not a linear function of~$n$, then $f_n(q)$
does not satisfy $af=b$ for $a,b \in \BK(q,q^n)$.
\end{lemma}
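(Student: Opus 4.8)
The plan is to prove the contrapositive directly. Suppose that $f_n(q)$ does satisfy an inhomogeneous recurrence of order zero, i.e. there exist $a,b \in \BK(q,q^n)$ with $a \neq 0$ such that $a(q,q^n) f_n(q) = b(q,q^n)$ for all sufficiently large~$n$. I will then show that this forces $\deg_q(f_n(q))$ to be a linear (affine) function of~$n$, contradicting the hypothesis. The whole argument is just bookkeeping on $q$-degrees, so it should be short.

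First I would write the rational functions $a$ and $b$ in terms of the variable $Q := q^n$, say $a = a(q,Q)$ and $b = b(q,Q)$ as elements of $\BK(q,Q)$. Solving the relation gives $f_n(q) = b(q,q^n)/a(q,q^n)$, which means that for each large~$n$ the Laurent polynomial $f_n(q)$ equals the specialization $Q = q^n$ of a fixed rational function in the two variables $q$ and~$Q$. The key observation is that substituting $Q = q^n$ turns a monomial $q^i Q^j$ into $q^{i+jn}$, so the top $q$-degree of such a specialization, for large~$n$, is controlled by the monomial of $b/a$ whose exponent pair $(i,j)$ maximizes $i + jn$. Because $n$ is large and positive, this maximizer is eventually the monomial with the largest $Q$-degree (with ties broken by $q$-degree), and it stays the maximizer for all sufficiently large~$n$.

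Concretely, I would argue as follows. Clearing denominators, write $a f_n = b$ as an identity of Laurent polynomials in~$q$ for each~$n$, and compare $\deg_q$ on both sides using the fact that $\deg_q$ is additive on products. Since $a,b \in \BK(q,q^n)$, their $q$-degrees are, for large~$n$, affine functions of~$n$: indeed, if $a(q,Q) = \sum_{i,j} a_{i,j} q^i Q^j$ then $\deg_q\big(a(q,q^n)\big) = \max\{\,i + jn : a_{i,j}\neq 0\,\}$, and for $n$ large this maximum is attained at the lexicographically top term and hence equals $\alpha + \beta n$ for fixed constants $\alpha,\beta \in \BZ$. The same holds for~$b$ with constants $\gamma,\delta$. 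Therefore, from $\deg_q(f_n) = \deg_q(b) - \deg_q(a)$, we conclude that $\deg_q(f_n(q)) = (\gamma - \alpha) + (\delta - \beta)n$ is an affine function of~$n$ for all large~$n$, which is the desired contradiction.

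The main (and really the only) obstacle is making precise the claim that $\deg_q$ of a specialization $g(q,q^n)$ of a fixed rational function $g \in \BK(q,Q)$ is eventually affine in~$n$; this requires observing that the monomial realizing the maximum of $i + jn$ stabilizes once $n$ exceeds the finitely many ``crossover'' values determined by pairwise comparisons of the exponent pairs occurring in the numerator and denominator. Since there are only finitely many such pairs, only finitely many thresholds arise, and beyond the largest one the degree is genuinely affine. Everything else is the additivity of $\deg_q$ under multiplication, which holds over the field $\BK(q)$ since $\BK$ has characteristic zero and there are no zero divisors. I would then simply apply this to the case at hand: since Theorem~\ref{thm.cj22} and the tabulated values show that $\deg_q\big(J_{7_4,n}(q)\big)$ is not affine in~$n$, Lemma~\ref{lem.noCF} rules out an order-zero inhomogeneous recurrence and completes the proof of Theorem~\ref{thm.cj22}.
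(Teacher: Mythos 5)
Your proof is correct, and it takes the approach the paper itself intends: the paper offers no written proof at all (it simply declares ``The next lemma is obvious''), and the obvious argument is exactly your degree bookkeeping, namely that $af=b$ with $a\neq 0$ forces $\deg_q(f_n)=\deg_q\big(b(q,q^n)\big)-\deg_q\big(a(q,q^n)\big)$, each term being eventually affine in~$n$ because the maximizing exponent pair $(i,j)$ in $\max\{i+jn\}$ stabilizes past finitely many crossover values of~$n$. You correctly supply the details the paper suppresses, including the tacit $a\neq0$ hypothesis (made explicit in condition (c) of Lemma~\ref{lem.cj22}) and the observation that no cancellation among the $q^{i+jn}$ occurs for large~$n$, which is needed for the degree formula $\deg_q\big(\sum a_{ij}q^{i+jn}\big)=\max\{i+jn\}$ to hold.
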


The degree $\deg_q(J_{K,n}(q))$ of an alternating knot is well-known
and given by a quadratic polynomial in~$n$; see for
instance~\cite{Ga2} and~\cite{Le}. In the case of the alternating
knot~$7_4$, we have
\[
  \deg_q(J_{7_4,n}(q))= \frac{7}{2} n^2 - \frac{5}{2} n -1 \,.
\]
It follows that $J_{7_4,n}$ does not satisfy an inhomogeneous
recurrence of order zero.


\section{Proof of Theorem \ref{thm.cj22}}
\lbl{sec.thm.cj22}

In this section we will finish the proof of Theorem \ref{thm.cj22}.
It follows from the following lemma, of independent interest.

\begin{lemma}
\lbl{lem.cj22}
Suppose $f$ is a $q$-holonomic sequence such that
\renewcommand{\labelenumi}{(\alph{enumi})}
\begin{enumerate}
\item
$f$ satisfies the inhomogeneous recurrence $Pf=b$,
\item
$P \in \BW$ is irreducible, $\deg_L(P) > 1$ and $b \in \BK(q,q^n) \neq 0$,
\item
$f$ does not satisfy a recurrence of the form $a f = c$ for 
$a,c \in \BK(q,q^n)$, $a \neq 0$.
\end{enumerate}
Then the minimal-order homogeneous recurrence relation that $f$ satisfies
is given by $(L-1)(b^{-1} P) f = 0$. 
\end{lemma}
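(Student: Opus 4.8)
The plan is to work in the localized $q$-Weyl algebra $\BW$, which by the left division algorithm recalled in Section~\ref{sec.irred} is a left principal ideal domain; hence $\ann(f)=\{R\in\BW\mid Rf=0\}$ is a left ideal $\BW Q$, where $Q$ is the (monic) minimal-order recurrence satisfied by $f$. Since $b\neq0$, assumption~(a) forces $f\neq0$, so $\deg_L Q\ge1$. First I would set $\ti P:=b^{-1}P$; as $b\in\BK(q,q^n)$ is a nonzero rational function it is a unit of $\BW$, so $\ti P$ is again irreducible of $L$-degree $d:=\deg_L(P)$, and $\ti P f=b^{-1}(Pf)=\mathbf1$, the constant sequence $1$. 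Applying $L-1$ gives $(L-1)\ti P f=(L-1)\mathbf1=0$, so $(L-1)\ti P=(L-1)(b^{-1}P)$ lies in $\ann(f)$ and has $L$-degree $d+1$. The lemma therefore reduces to showing that $Q$ is a left-associate of $(L-1)\ti P$, i.e.\ that $\deg_L Q=d+1$; the bound $\deg_L Q\le d+1$ is immediate from minimality.

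The key step is a dichotomy coming from the irreducibility of $\ti P$. Let $G$ generate the left ideal $\BW Q+\BW\ti P$, so that $G$ is a greatest common right divisor of $Q$ and $\ti P$. Because $\ti P$ is irreducible, its right divisor $G$ is either a unit or a left-associate of $\ti P$. In the second case $\BW Q+\BW\ti P=\BW\ti P$, so $\ti P$ is a right factor of $Q$, say $Q=Q''\ti P$; then $0=Qf=Q''(\ti P f)=Q''\mathbf1$, so $Q''$ annihilates the constant sequence. Since the minimal annihilator of $\mathbf1$ is $L-1$, we get $Q''=T(L-1)$ and hence $Q=T(L-1)\ti P$. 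Comparing $L$-degrees with the bound $\deg_L Q\le d+1$ forces $T$ to be a unit, so $Q$ is a left-associate of $(L-1)(b^{-1}P)$, as wanted.

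The remaining case $G=1$ (that is, $Q$ and $\ti P$ right-coprime) is where assumption~(c) enters, and I expect it to be the real content. Right-coprimeness yields a B\'ezout identity $UQ+V\ti P=1$ with $U,V\in\BW$; applying both sides to $f$ gives $f=UQf+V\ti P f=V\mathbf1$. But $V=\sum_j v_j(q,M)L^j$ acts on the constant sequence by $V\mathbf1=\big(\sum_j v_j\big)(q,q^n)$, so $f_n=a(q,q^n)$ for the single rational function $a=\sum_j v_j\in\BK(q,q^n)$, and $a\neq0$ since $f\neq0$. Thus $f$ satisfies the order-zero recurrence $1\cdot f=a$, contradicting assumption~(c). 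Hence $G=1$ is impossible and the previous case always applies.

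The main obstacle is precisely this coprime case: without irreducibility of $P$ the greatest common right divisor $G$ could be a proper intermediate factor and the clean two-way split would break down, while without hypothesis~(c) the coprime case genuinely produces a smaller, order-zero recurrence. Once both inputs are in place the degree bookkeeping is routine, and the conclusion is that the minimal-order homogeneous recurrence is $(L-1)(b^{-1}P)f=0$, of order $\deg_L(P)+1$.
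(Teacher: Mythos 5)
Your proposal is correct, and it reaches the conclusion by a genuinely different route than the paper. The paper's proof factors $P'=(L-1)(b^{-1}P)$ into two irreducibles of $L$-degrees $1$ and $d$ and invokes Ore's theorem \cite{Ore} that the number of irreducible factors of each fixed order is the same in every factorization; a minimal annihilating operator $P''$, being a right factor of $P'$, must then be irreducible of $L$-degree $1$ or~$d$, and each case is excluded by dividing $P$ by $P''$ with remainder $R$, observing $Rf=b$, and using hypothesis (c) (for degree $1$) or a factor-counting contradiction (for degree $d$). You bypass Ore's nonuniqueness-of-factorization theory entirely: using only that $\BW$ is a left Euclidean (hence left principal ideal) domain, you write $\ann(f)=\BW Q$ and run a GCRD/B\'ezout dichotomy on the pair $(Q,\ti P)$ with $\ti P=b^{-1}P$, irreducibility entering exactly once (a right divisor of $\ti P$ is a unit or a left-associate). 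Your coprime case makes the role of hypothesis (c) more transparent: the identity $UQ+V\ti P=1$ applied to $f$ gives $f=V\mathbf{1}$, i.e., $f_n$ is a single rational function of $q$ and $q^n$, which is literally what (c) forbids, whereas in the paper (c) enters indirectly through the remainder of a division. What the paper's argument buys is a statement that scales---the invariance of factor orders rules out annihilators of every order $<d+1$ at once, in the spirit of Section~\ref{sec.irred}---while yours buys self-containedness and a clean isolation of where each hypothesis is used. One cosmetic repair: the inference ``$f\neq 0$, hence $\deg_L Q\geq 1$'' is not immediate as stated, since a nonzero sequence supported on finitely many~$n$ is annihilated by a suitable degree-zero operator (e.g., $M-1$ kills $f_n=\delta_{n,0}$); but hypothesis (c) with $c=0$ excludes degree-zero annihilators directly (as does (a), since such an $f$ is eventually zero while $b(q,q^n)$ is eventually nonzero), so nothing is lost.
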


\begin{proof}
Let $d=\deg_L(P)$ and $P'=(L-1)(b^{-1} P)$. $P'$ is the product of two 
irreducible elements of $\BW$ (namely, $L-1$ and $b^{-1} P$) of $L$-degrees $1$ and 
$d$ respectively. Recall that $\BW$ is a Euclidean domain. 
Although the factorization of an element in $\BW$ into irreducible factors
is not unique in general,
\cite[Thm.1]{Ore} proves that the number of irreducible factors of a fixed
order is independent of the factorization. It follows
that any factorization of $P'$ into a product
of irreducible factors has exactly two factors, one of $L$-degree $1$ and 
another of $L$-degree~$d$.

Suppose $P'' f=0$ where $P''$ has minimal $L$-degree strictly less than $d+1$.
Since $\BW$ is a Euclidean domain, it follows that $P''$ is a right
factor of~$P'$, and $P''$ is a product of irreducible factors. The above 
discussion implies that $P''$ is irreducible of $L$-degree $1$ or~$d$.
Since $\BW$ is a Euclidean domain, we can write $P=Q P'' + R$ where $R \neq 0$
and $\deg_L(R) < \deg_L(P'')$. It follows that $R f =b$, thus $(L-1)(b^{-1} R)f=0$.
By the choice of~$P''$, it follows that $P''$ is a right factor of 
$(L-1)(b^{-1} R)$.\\
{\bf Case 1:} $\deg_L(P'')=1$. Then $\deg_L(R)=0$ and $f$ satisfies 
$Rf=b$ contrary to the hypothesis.\\
{\bf Case 2:} $\deg_L(P'')=d$. Then, $P''$ is irreducible and it is
a right factor of $(L-1)(b^{-1} R)$ where $\deg_L(b^{-1} R)< d$. It follows that
any factorization of $b^{-1} R$, extended to a factorization of $(L-1)(b^{-1} R)$,
will contain an irreducible factor of $L$-degree~$d$. This is impossible
since $\deg_L(b^{-1} R)< d$.
\end{proof}


\section{Extension to double twist knots}
\lbl{sec.double.twist}

\subsection{The $A$-polynomial of double twist knots}
\lbl{sub.AKpp}

The $\SL(2,\BC)$ character variety of non-abelian representations of 
$K_{p,p}$ for $p>1$ consists of two components, the geometric one, and 
the non-geometric one \cite{Pe}. It follows that the $A$-polynomial of
$K_{p,p}$ is the product of two factors, with multiplicities.
The values of $A_{K_{p,p}}(M,L)$ for $p=2,\dots,8$, as well as the recurrences
presented in Section~\ref{sub.AqKpp}, are available from
\begin{center}
{\tt \url{http://www.math.gatech.edu/~stavros/publications/double.twist.data/}}
\end{center}
For $p=2,\dots,8$ we have
\[
  A_{K_{p,p}}(M,L)=A^{\text{geom}}_{K_{p,p}}(M,L) A^{\text{ngeom}}_{K_{p,p}}(M,L)^2
\] 
is the product of two irreducible factors: the geometric component has
$(M,L)$-degree $(2p-1,8p-2)$ and multiplicity one, and the non-geometric
one has $(M,L)$-degree $(p^2-p,4p^2-4)$ and multiplicity two.  The
Newton polygons of $A^{\text{geom}}_{K_{p,p}}$ and
$A^{\text{ngeom}}_{K_{p,p}}$ are parallelograms given by the
convex hull of
$$
  \big\{(2 p - 1, 8 p - 2), (1, 8 p - 2), (0, 0), (2 p - 2, 0)\big\}
$$
and
$$
  \big\{(p^2 - p, 4 p^2 -4 p), (p - 1, 4 p^2 - 4 p), (0, 0), (p^2 - 2 p + 1, 0)\big\}
$$
respectively in $(M,L)$-coordinates. The area of the above Newton
polygons is $4(4p-1)(p-1)$ and $4p(p-1)^3$, respectively.  The
behavior of the Newton polygon of $A_{p,p}(M,L)$ as a function of~$p$
is in agreement with a theorem of~\cite{Ga5}.

\subsection{The non-commutative $A$-polynomial of double twist knots}
\lbl{sub.AqKpp}

We have rigorously computed an inhomogeneous recurrence for the double
twist knot~$K_{3,3}$, using the creative telescoping algorithm proposed in~\cite{Koutschan10c}, see
also Section~\ref{sec.compute}.  It has order~$11$ and its
$(q,q^n)$-degree is $(458,74)$. Moreover, it verifies the AJ
conjecture using the reduced $A$-polynomial. The corresponding
operator factors for $q=1$ into two irreducible factors of $L$-degrees
$5$ and~$6$. In order to show the irreducibility of the operator
itself (to prove that the computed recurrence is of minimal order), we
would have to investigate its fifth and sixth exterior powers---a
challenge that currently seems hopeless.

For $K_{4,4}$ and $K_{5,5}$ we were able to obtain recurrences, using
an ansatz with undetermined coefficients (``guessing''). Although they
were derived in a non-rigorous way, they both confirm the AJ
conjecture using the reduced $A$-polynomial. Again, both recurrences
are inhomogeneous; the one for $K_{4,4}$ has order~$19$ and
$(q,q^n)$-degree $(2045,184)$, the one for $K_{5,5}$ is a truly
gigantic one: it is of order~$29$, has $(q,q^n)$-degree $(6922,396)$,
and its total size is nearly 8GB (according to
Mathematica's \texttt{ByteCount}). These data qualify it as a good
candidate for the largest $q$-difference equation that has ever been
computed explicitly.  A rigorous derivation of these two recurrences
using creative telescoping, or even the application of the
irreducibility criterion using exterior powers, is far beyond our
current computing abilities.

\subsection*{Acknowledgment}
The authors wish to thank Marko Petkov\v{s}ek for sharing his
expertise on the qHyper algorithm and the factorization of linear
operators.


\appendix

\section{The formula for the non-commutative $A$-polynomial of $7_4$}
\lbl{app.74}

In the following, Equation~\ref{eq.74} from Theorem~\ref{thm.cj22} is
given explicitly; note that the operator $P_{7_4}(q,M,L)=\sum_{j=0}^5
a_j(q,M)L^j$ is palindromic since
$a_j(q,M)=-q^{60}M^{24}a_{5-j}(q,(q^5M)^{-1})$ (and therefore only
$a_5$, $a_4$, and $a_3$ are displayed).
{\tiny
\begin{alignat*}{2}
a_{5} & = &\,& (qM-1)(qM+1)(qM^2-1)(q^2M-1)(q^2M+1)(q^3M^2-1)(q^5M-1)(q^8(q+1)M^4-q^5(q^3+2q^2+q+1)M^3+{}\\
&&& q^2(2q^4+q^3+2q^2+2q+1)M^2-q(q^3+2q^2+q+1)M+(q+1))
\end{alignat*}
\begin{alignat*}{2}
a_{4} & = && q(qM-1)(qM+1)(qM^2-1)(q^3M^2-1)(q^4M-1)^2(q^4M+1)(q^{33}(q+1)M^{11}-q^{29}(q+2)(q^3+q+1)M^{10}+{}\\
&&& q^{24}(q+1)(2q^6-2q^5+5q^4+q^3+4q^2+3q-1)M^9-q^{20}(4q^7+2q^6+9q^5+10q^4+6q^3+6q^2-q-2)M^8-{}\\
&&& q^{16}(2q^{11}+q^9-2q^8-4q^7-12q^5-10q^4-3q^3+6q+3)M^7+{}\\
&&& q^{12}(q^{13}+2q^{12}+5q^{11}+q^{10}+4q^9-2q^7-8q^5+q^4+7q^3+7q^2+7q+2)M^6-{}\\
&&& q^9(q^{13}+3q^{12}+8q^{11}+8q^{10}+q^9+4q^8+q^7+3q^6+q^5-4q^4+7q^3+10q^2+7q+3)M^5+{}\\
&&& q^6(4q^{12}+7q^{11}+9q^{10}+4q^9-2q^8+q^7-4q^6-3q^5-3q^4-q^3+5q^2+4q+2)M^4-{}\\
&&& q^5(q^{10}+5q^9+6q^8+3q^7-7q^6-10q^5-7q^4-9q^3-9q^2-9q-3)M^3+{}\\
&&& q^2(q^2+q+1)(q^7+2q^6-5q^5-5q^4-3q^3-2q^2-3q-2)M^2+q(q^5+6q^4+9q^3+8q^2+3q+2)M-(q+1)(q+2))
\end{alignat*}
\begin{alignat*}{2}
a_{3} & = && {-q^2}(qM-1)(qM+1)(qM^2-1)(q^3M-1)^2(q^3M+1)(q^9M^2-1)(q^{41}(q+1)M^{15}-q^{37}(q^4+2q^3+3q^2+4q+1)M^{14}+{}\\
&&& q^{34}(q^5+q^4+7q^3+9q^2+8q+3)M^{13}+q^{29}(q^9+2q^8-2q^7-2q^6-10q^5-17q^4-12q^3-3q^2+2q+1)M^{12}-{}\\
&&& q^{25}(2q^{11}+4q^{10}+5q^9+4q^8-3q^7-11q^6-17q^5-11q^4+2q^3+8q^2+5q+1)M^{11}+{}\\
&&& q^{22}(6q^{11}+12q^{10}+8q^9+8q^8-14q^6-19q^5-6q^4+11q^3+16q^2+9q+2)M^{10}+{}\\
&&& q^{18}(2q^{14}-2q^{13}-9q^{12}-17q^{11}-11q^{10}+10q^8+20q^7+24q^6+7q^5-15q^4-20q^3-10q^2+1)M^9-{}\\
&&& q^{15}(q^{15}+6q^{14}-3q^{13}-14q^{12}-14q^{11}-4q^{10}+11q^9+25q^8+36q^7+35q^6+16q^5-9q^4-13q^3-6q^2+3q+3)M^8+{}\\
&&& q^{12}(4q^{15}+6q^{14}-3q^{13}-18q^{12}-16q^{11}+4q^{10}+23q^9+30q^8+39q^7+31q^6+12q^5-14q^4-14q^3-q^2+3q+3)M^7-{}\\
&&& q^9(5q^{15}+3q^{14}-11q^{13}-23q^{12}-18q^{11}+2q^{10}+19q^9+20q^8+21q^7+8q^6-7q^5-20q^4-22q^3-5q^2+q+1)M^6+{}\\
&&& q^8(q+1)(2q^{12}-4q^{11}-13q^{10}-17q^9-q^8+2q^7+11q^6-2q^5+5q^4-9q^3-13q^2-12q-6)M^5+{}\\
&&& q^5(5q^{12}+16q^{11}+25q^{10}+11q^9-8q^8-19q^7-16q^6-4q^5-2q^4+6q^3+11q^2+5q+1)M^4-{}\\
&&& q^4(2q^{10}+10q^9+9q^8-3q^7-22q^6-23q^5-20q^4-13q^3-6q^2-3q+1)M^3+{}\\
&&& q^2(q+1)(2q^7-4q^6-6q^5-17q^4-6q^3-6q^2-2q-1)M^2+q(2q^5+8q^4+11q^3+10q^2+3q+1)M-(q+1)(2q+1))
\end{alignat*}
\begin{alignat*}{2}
b_{7_4} & = && {-q^{10}}M^3(qM+1)(q^2M+1)(q^3M+1)(q^4M+1)(qM^2-1)(q^3M^2-1)(q^5M^2-1)(q^7M^2-1)(q^9M^2-1)\\
&&& (q^{10}(q^3+q^2-q+1)M^4-q^6(2q^5+2q^3+q^2-q+1)M^3+q^2(q+1)(q^7-2q^6+4q^5-q^4+q^3+q^2-q+1)M^2-{}\\
&&& q(2q^5+2q^3+q^2-q+1)M+(q^3+q^2-q+1))
\end{alignat*}
}
When $q$ is set to~$1$, the above expressions simplify drastically.
For a concise presentation we introduce the following notation for
some frequently appearing irreducible factors:
{\tiny
\begin{alignat*}{2}
v_1 & = &\,& M^4-M^3-2M^2-M+1\\
v_2 & = && M^4-2M^3+6M^2-2M+1\\
v_3 & = && 2M^4-5M^3+8M^2-5M+2\\
v_4 & = && M^7-2M^6+3M^5+2M^4-7M^3+2M^2+6M-2\\
v_5 & = && M^8-2M^7+6M^6+2M^5-10M^4+2M^3+6M^2-2M+1\\
v_6 & = && M^{12}-6M^{11}+16M^{10}-24M^9+15M^8+14M^7-36M^6+14M^5+15M^4-24M^3+16M^2-6M+1\\
v_7 & = && 2M^{14}-10M^{13}+16M^{12}-4M^{11}-46M^{10}+67M^9+28M^8-116M^7+28M^6+67M^5-46M^4-{}\\
&&& 4M^3+16M^2-10M+2\\
v_8 & = && M^{18}-4M^{17}+10M^{16}-10M^{15}-3M^{14}+40M^{13}-67M^{12}-34M^{11}+157M^{10}-14M^9-140M^8+{}\\
&&& 40M^7+66M^6-18M^5-14M^4+4M^3+4M^2-4M+1\\
v_9 & = && M^{26}-8M^{25}+42M^{24}-142M^{23}+345M^{22}-554M^{21}+521M^{20}+51M^{19}-729M^{18}+827M^{17}+{}\\
&&& 234M^{16}-843M^{15}+707M^{14}-45M^{13}+707M^{12}-843M^{11}+234M^{10}+827M^9-729M^8+51M^7+{}\\
&&& 521M^6-554M^5+345M^4-142M^3+42M^2-8M+1
\end{alignat*}
}

Now the inhomogeneous part~$b_{7_4}$ and the operator~$P_{7_4}$, together
with its second and third exterior power, evaluated at $q=1$, 
can be written in a few lines. A bar is used to denote the mirror
of a polynomial, i.e., $\overline{v} = M^{\deg(v)}v(1/M)$.
{\small
\begin{alignat*}{2}
b_{7_4}(1,M) & = &\,& {-M^3}(M-1)^5(M+1)^9 v_3\\
P_{7_4}(1,M,L) & = && (M-1)^5(M+1)^4 v_3 (L^2-v_1L+M^4)(L^3+v_4L^2+\overline{v_4}L+M^7)\\
P_{7_4}^{(2)}(1,M,L) & = && (M-1)^{10}(M+1)^{10}(M^2+1)^2 v_2 v_3^4 v_5 v_6 v_9 (L-M^4)(L^3-\overline{v_4}L^2+M^7v_4L-M^{14})\\
&&& \times(L^6+v_1v_4L^5+v_8L^4-M^4v_1v_7L^3+M^8\overline{v_8}L^2+M^{15}v_1\overline{v_4}L+M^{26})\\
P_{7_4}^{(3)}(1,M,L) & = && (M-1)^{19}(M+1)^{20}(M^2+1)^2 v_2 v_3^6 v_5 v_6 v_9 (L+M^7)(L^3+M^4v_4L^2+M^8\overline{v_4}L+M^{19})\\
&&& \times(L^6-v_1\overline{v_4}L^5+M^4\overline{v_8}L^4+M^{11}v_1v_7L^3+M^{18}v_8L^2-M^{29}v_1v_4L+M^{40})
\end{alignat*}
}

\bibliographystyle{hamsalpha}
\bibliography{biblio}

\newcommand{\etalchar}[1]{$^{#1}$}
\providecommand{\bysame}{\leavevmode\hbox to3em{\hrulefill}\thinspace}
\providecommand{\href}[2]{#2}
\providecommand{\eprint}{\begingroup \urlstyle{rm}\Url}
\begin{thebibliography}{CCG{\etalchar{+}}94}

\bibitem[APP98]{APP}
Sergei~A. Abramov, Peter Paule, and Marko Petkov{\v{s}}ek,
  \emph{{$q$}-hypergeometric solutions of {$q$}-difference equations}, Discrete
  Math. \textbf{180} (1998), no.~1-3, 3--22.

\bibitem[BN05]{B-N}
Dror Bar-Natan, \emph{Knotatlas}, 2005, \url{http://katlas.org}.

\bibitem[BP96]{BronsteinPetkovsek96}
Manuel Bronstein and Marko Petkov\v{s}ek, \emph{An introduction to
  pseudo-linear algebra}, Theoretical Computer Science \textbf{157} (1996),
  no.~1, 3--33.

\bibitem[Bro96]{Bronstein}
Manuel Bronstein, \emph{On the factorisation of linear ordinary differential
  operators}, Math. Comput. Simulation \textbf{42} (1996), no.~4-6, 387--389,
  Symbolic computation, new trends and developments (Lille, 1993).

\bibitem[CCG{\etalchar{+}}94]{CCGLS}
D.~Cooper, M.~Culler, H.~Gillet, D.~D. Long, and P.~B. Shalen, \emph{Plane
  curves associated to character varieties of {$3$}-manifolds}, Invent. Math.
  \textbf{118} (1994), no.~1, 47--84.

\bibitem[Chy00]{Chyzak00}
Fr\'{e}d\'{e}ric Chyzak, \emph{An extension of {Z}eilberger's fast algorithm to
  general holonomic functions}, Discrete Mathematics \textbf{217} (2000),
  no.~1-3, 115--134.

\bibitem[CvH06]{CluzeauHoeij06}
Thomas Cluzeau and Mark van Hoeij, \emph{Computing hypergeometric solutions of
  linear difference equations}, Applicable Algebra in Engineering,
  Communication and Computing \textbf{17} (2006), no.~2, 83--115.

\bibitem[DLRS10]{LRS}
Jes{\'u}s~A. De~Loera, J{\"o}rg Rambau, and Francisco Santos,
  \emph{Triangulations}, Algorithms and Computation in Mathematics, vol.~25,
  Springer-Verlag, Berlin, 2010, Structures for algorithms and applications.

\bibitem[Gar]{Ga6}
Stavros Garoufalidis, \emph{Quantum knot invariants}, \eprint{arXiv:1201.3314},
  Mathematische Arbeitstagung 2012.

\bibitem[Gar04]{Ga1}
Stavros Garoufalidis, \emph{On the characteristic and deformation varieties of
  a knot}, Proceedings of the {C}asson {F}est, Geom. Topol. Monogr., vol.~7,
  Geom. Topol. Publ., Coventry, 2004, pp.~291--309 (electronic).

\bibitem[Gar11a]{Ga3}
\bysame, \emph{The degree of a {$q$}-holonomic sequence is a quadratic
  quasi-polynomial}, Electron. J. Combin. \textbf{18} (2011), no.~2, Paper 4,
  23.

\bibitem[Gar11b]{Ga2}
\bysame, \emph{The {J}ones slopes of a knot}, Quantum Topol. \textbf{2} (2011),
  no.~1, 43--69.

\bibitem[Gar11c]{Ga5}
\bysame, \emph{The role of holonomy in {TQFT}}, 2011, \eprint{arXiv:1102.1346},
  Preprint.

\bibitem[Gel02]{Ge}
R{\u{a}}zvan Gelca, \emph{On the relation between the {$A$}-polynomial and the
  {J}ones polynomial}, Proc. Amer. Math. Soc. \textbf{130} (2002), no.~4,
  1235--1241 (electronic).

\bibitem[GL05]{GL}
Stavros Garoufalidis and Thang T.~Q. L{\^e}, \emph{The colored {J}ones function
  is {$q$}-holonomic}, Geom. Topol. \textbf{9} (2005), 1253--1293 (electronic).

\bibitem[GS06]{GS1}
Stavros Garoufalidis and Xinyu Sun, \emph{The {$C$}-polynomial of a knot},
  Algebr. Geom. Topol. \textbf{6} (2006), 1623--1653 (electronic).

\bibitem[GS10]{GS2}
\bysame, \emph{The non-commutative {$A$}-polynomial of twist knots}, J. Knot
  Theory Ramifications \textbf{19} (2010), no.~12, 1571--1595.

\bibitem[Hab08]{Habiro}
Kazuo Habiro, \emph{A unified {W}itten-{R}eshetikhin-{T}uraev invariant for
  integral homology spheres}, Invent. Math. \textbf{171} (2008), no.~1, 1--81.

\bibitem[Hor08]{Horn08}
Peter Horn, \emph{{Faktorisierung in Schief-Polynomringen}}, Ph.D. thesis,
  Universit\"at Kassel, 2008.

\bibitem[Jan96]{Ja}
Jens~Carsten Jantzen, \emph{Lectures on quantum groups}, Graduate Studies in
  Mathematics, vol.~6, American Mathematical Society, Providence, RI, 1996.

\bibitem[Kou09]{Ko1}
Christoph Koutschan, \emph{Advanced applications of the holonomic systems
  approach}, Ph.D. thesis, RISC, Johannes Kepler University, Linz, Austria,
  2009.

\bibitem[Kou10a]{Koutschan10c}
Christoph Koutschan, \emph{A fast approach to creative telescoping},
  Mathematics in Computer Science \textbf{4} (2010), no.~2-3, 259--266.

\bibitem[Kou10b]{Ko2}
Christoph Koutschan, \emph{{HolonomicFunctions} (user's guide)}, Tech. Report
  10-01, RISC Report Series, Johannes Kepler University Linz, 2010.

\bibitem[Lau10]{Lauridsen}
Magnus~Roed Lauridsen, \emph{Aspects of quantum mathematics, {H}itchin
  connections and {AJ} {C}onjectures}, Ph.D. thesis, Aarhus University, Aarhus,
  Denmark, 2010.

\bibitem[L{\^e}06]{Le}
Thang T.~Q. L{\^e}, \emph{The colored {J}ones polynomial and the
  {$A$}-polynomial of knots}, Adv. Math. \textbf{207} (2006), no.~2, 782--804.

\bibitem[LT13]{LeTran}
Thang T.~Q. L{\^e} and Anh~T. Tran, \emph{On the {AJ} conjecture for knots},
  Transactions of the AMS (2013), To appear (preprint on arXiv:1111.5258).

\bibitem[Mac95]{Macdonald}
I.~G. Macdonald, \emph{Symmetric functions and {H}all polynomials}, second ed.,
  Oxford Mathematical Monographs, The Clarendon Press Oxford University Press,
  New York, 1995, With contributions by A. Zelevinsky, Oxford Science
  Publications.

\bibitem[Mas03]{Masbaum}
Gregor Masbaum, \emph{Skein-theoretical derivation of some formulas of
  {H}abiro}, Algebr. Geom. Topol. \textbf{3} (2003), 537--556 (electronic).

\bibitem[Ore33]{Ore}
Oystein Ore, \emph{Theory of non-commutative polynomials}, Ann. of Math. (2)
  \textbf{34} (1933), no.~3, 480--508.

\bibitem[Pet92]{Petkovsek92}
Marko Petkov\v{s}ek, \emph{Hypergeometric solutions of linear recurrences with
  polynomial coefficients}, Journal of Symbolic Computation \textbf{14} (1992),
  no.~2/3, 243--264.

\bibitem[Pet11]{Pe}
Kathleen~L. Petersen, \emph{A-polynomials of a family of 2-bridge knots}, 2011,
  Preprint.

\bibitem[PR97]{PauleRiese97}
Peter Paule and Axel Riese, \emph{A {M}athematica $q$-analogue of
  {Z}eilberger's algorithm based on an algebraically motivated approach to
  $q$-hypergeometric telescoping}, Special Functions, $q$-Series and Related
  Topics (Mourad E.~H. Ismail, David~R. Masson, and Mizan Rahman, eds.), Fields
  Institute Communications, vol.~14, American Mathematical Society, 1997,
  pp.~179--210.

\bibitem[PWZ96]{PWZ}
Marko Petkov{\v{s}}ek, Herbert~S. Wilf, and Doron Zeilberger, \emph{{$A=B$}}, A
  K Peters Ltd., Wellesley, MA, 1996, With a foreword by Donald E. Knuth, With
  a separately available computer disk.

\bibitem[Rol90]{Rf}
Dale Rolfsen, \emph{Knots and links}, Mathematics Lecture Series, vol.~7,
  Publish or Perish Inc., Houston, TX, 1990, Corrected reprint of the 1976
  original.

\bibitem[Sch89]{Schwarz}
Fritz Schwarz, \emph{A factorization algorithm for linear ordinary differential
  equations}, I{SSAC} 1989---{P}roceedings of the 1989 {I}nternational
  {S}ymposium on {S}ymbolic and {A}lgebraic {C}omputation, ACM, New York, 1989,
  pp.~17--25.

\bibitem[Ste05]{Stembridge}
John~R. Stembridge, \emph{{SF} package}, 2005,
  \url{http://www.math.lsa.umich.edu/~jrs}.

\bibitem[Tsa96]{Tsarev}
S.~P. Tsarev, \emph{An algorithm for complete enumeration of all factorizations
  of a linear ordinary differential operator}, I{SSAC} 1996---{P}roceedings of
  the 36th {I}nternational {S}ymposium on {S}ymbolic and {A}lgebraic
  {C}omputation, ACM, New York, 1996, pp.~226--231.

\bibitem[Tur88]{Tu1}
V.~G. Turaev, \emph{The {Y}ang-{B}axter equation and invariants of links},
  Invent. Math. \textbf{92} (1988), no.~3, 527--553.

\bibitem[Tur94]{Tu2}
\bysame, \emph{Quantum invariants of knots and 3-manifolds}, de Gruyter Studies
  in Mathematics, vol.~18, Walter de Gruyter \& Co., Berlin, 1994.

\bibitem[vdPS03]{vPS}
Marius van~der Put and Michael~F. Singer, \emph{Galois theory of linear
  differential equations}, Grundlehren der Mathematischen Wissenschaften
  [Fundamental Principles of Mathematical Sciences], vol. 328, Springer-Verlag,
  Berlin, 2003.

\bibitem[vH97]{Hoeij1}
Mark van Hoeij, \emph{Factorization of differential operators with rational
  functions coefficients}, J. Symbolic Comput. \textbf{24} (1997), no.~5,
  537--561.

\bibitem[WZ92]{WilfZeilberger92}
Herbert~S. Wilf and Doron Zeilberger, \emph{An algorithmic proof theory for
  hypergeometric (ordinary and ``$q$'') multisum/integral identities},
  Inventiones Mathematicae \textbf{108} (1992), no.~1, 575--633.

\bibitem[Zei90a]{Zeilberger90a}
Doron Zeiberger, \emph{A fast algorithm for proving terminating hypergeometric
  identities}, Discrete Mathematics \textbf{80} (1990), no.~2, 207--211.

\bibitem[Zei90b]{Zeilberger90}
Doron Zeilberger, \emph{A holonomic systems approach to special functions
  identities}, Journal of Computational and Applied Mathematics \textbf{32}
  (1990), no.~3, 321--368.

\bibitem[Zei91]{Zeilberger91}
\bysame, \emph{The method of creative telescoping}, Journal of Symbolic
  Computation \textbf{11} (1991), 195--204.

\bibitem[Zie95]{Ziegler}
G{\"u}nter~M. Ziegler, \emph{Lectures on polytopes}, Graduate Texts in
  Mathematics, vol. 152, Springer-Verlag, New York, 1995.

\end{thebibliography}
\end{document}